\newcommand{\bdsb}[1]{\boldsymbol{#1}}
\DeclareMathOperator*{\argmin}{arg\,min}
\newtheorem{lem}{Lemma}
\newtheorem{thm}{Theorem}
\newtheorem{rmk}{Remark}
\newtheorem{proposition}{Proposition}
\title{A Framework for Phasor Measurement Placement in Hybrid State Estimation via Gauss-Newton}
\author{Xiao Li, Anna Scaglione and Tsung-Hui Chang\thanks{This work was supported by DOE under the TCIPG project. Part of this work was presented at the ICASSP conference \cite{li2013optimal}. }
}
\begin{document}

\maketitle

\begin{abstract}
In this paper, we study the placement of Phasor Measurement Units (PMU) for enhancing hybrid state estimation via the traditional Gauss-Newton method, which uses measurements from both PMU devices and Supervisory Control and Data Acquisition (SCADA) systems.
To compare the impact of PMU placements, we introduce a useful metric which accounts for three important requirements in power system state estimation: {\it convergence}, {\it observability} and {\it performance} (COP). Our COP metric can be used to evaluate the estimation performance and numerical stability of the state estimator, which is later used to optimize the PMU locations. In particular, we cast the optimal placement problem in a unified formulation as a semi-definite program (SDP) with integer variables and constraints that guarantee observability in case of measurements loss. Last but not least, we propose a relaxation scheme of the original integer-constrained SDP with randomization techniques, which closely approximates the optimum deployment. Simulations of the IEEE-30 and 118 systems corroborate our analysis, showing that the proposed scheme improves the convergence of the state estimator, while maintaining optimal asymptotic performance.
\end{abstract}

\begin{keywords}
Optimal placement, convergence, estimation
\end{keywords}

\vspace{-0.4cm}
\section{Introduction}
Power system state estimation (PSSE), using non-linear power measurements from the Supervisory Control and Data Acquisition (SCADA) systems, is plagued by ambiguities and convergence issues. Today, the more advanced Phasor Measurement Units (PMU) deployed in Wide-Area Measurement Systems (WAMS), provide synchronized voltage and current phasor readings at each instrumented bus, by leveraging the GPS timing information. PMUs data benefit greatly state estimation \cite{chow2011guidelines} because, if one were to use only PMUs, the state can be obtained as a simple linear least squares solution, in one shot  \cite{yang2011transition}. However, the estimation error can be quite high, and the system can loose even observability, due to the limited deployment of PMUs. For this reason researchers have proposed hybrid state estimation schemes \cite{phadke1986state}, integrating both PMU and SCADA data. Some of these methods incorporate the PMU measurements into the iterative state estimation updates \cite{meliopoulos2010supercalibrator,qin2007hybrid,nuqui2007hybrid}, while others use PMU data to refine the estimates obtained from SCADA data \cite{chakrabarti2010comparative,avila2009recent}. The estimation procedure becomes, again, iterative and, therefore, a rapid convergence
to an estimation error that is lower than what PMUs alone can provide, is crucial to render these hybrid systems useful. The goal of our paper is to provide a criterion to ensure the best of both worlds: greater accuracy and faster convergence for the hybrid system.
Before describing our contribution we briefly review the criteria that have been used thus far to select PMUs placements.

\subsubsection{Related Works}
The primary concern of measurement system design for PSSE is to guarantee the {\it observability} of the grid so that the state can be solved without ambiguities, which typically depends on the {\it number of measurements} available. Furthermore, it is also essential that the device {\it locations} are chosen such that they do not result in the formation of {\it critical measurements}, whose existence makes the system susceptible to inobservability due to measurements loss.

Therefore, conventional placement designs typically aim at minimizing the number and/or the cost of the sensors under various observability constraints, see e.g.\cite{gou2008optimal,gou2001improved,madtharad2003measurement,emami2010robust,baldwin1993power,bei2005optimal}. More specifically, \cite{gou2008optimal,gou2001improved} ensure observability by enforcing the algebraic invertibility of the linearized load-flow models or enhancing the numerical condition of the linear model \cite{madtharad2003measurement}. By treating the grid as a graph \cite{clements1990observability}, the schemes in \cite{bei2005optimal,baldwin1993power,emami2010robust} guarantee topological observability, corresponding to the requirement that all the buses have a path connected to at least one device. In general, algebraic observability implies topological observability for linear load-flow models but not vice versa \cite{baldwin1993power}. To suppress or eliminate critical measurements, \cite{magnago2000unified, chakrabarti2008optimal, chakrabarti2009placement, yehia2001pc, baran1995meter} propose placements that guarantee system observability even in case of device/branch outages, or bad data injections. These methods usually take a divide-and-conquer approach and include multiple stages. Specifically, the first stage determines a measurement set with fixed candidates (or size) by cost minimization, and then reduces (or selects) measurements within this set to ensure the topological observability. Numerical techniques such as genetic algorithms \cite{milosevic2003nondominated,aminifar2009optimal}, simulated-annealing \cite{baldwin1993power} and integer linear programming \cite{dua2008optimal} have also been applied in similar placement problems.


In addition to observability, authors have also targeted improvements in the estimation performance. For example, \cite{park1988design} minimizes a linear cost of individual devices subject to a total error constraint, while \cite{zhang2010observability} uses a two-stage approach that first guarantees topological observability and then refines the placement to improve estimation accuracy. In \cite{zhu2009application,asprou2011optimal}, instead, PMUs are placed iteratively on buses with the highest error (individual or sum), until a budget is met. A greedy method was proposed in \cite{li2011phasor}  for PMU placement by minimizing the estimation errors of the augmented PSSE using voltage and linearized power injection measurements. A similar problem is solved in \cite{kekatos2011convex} via convex relaxation and in \cite{li2012information} by maximizing the mutual information between sensor measurements and state vector.



\subsubsection{Motivation and Contributions}
The PMU placements algorithms in the literature  target typically a single specific criterion, observability or accuracy (see the reviews \cite{manousakis2012taxonomy,yuill2011optimal}). It was pointed out recently in \cite{manousakis2012taxonomy} that these objectives should be considered jointly, because designs for pure observability often have multiple solutions (e.g., \cite{chakrabarti2008optimal}) and they are  insufficient to provide accurate estimates. In this paper, we revisit this problem from a unified perspective.
Specifically we jointly consider observability, critical measurements, device outages and failures, estimation performance together with another important criterion that is oftentimes neglected, which is the convergence of the Gauss-Newton (GN) algorithm typically used in state estimation solvers.
Our contribution is: 1) the derivation of the {\it Convergence-Observability-Performance} (COP) metric to evaluate the numerical properties, estimation performance and reliability for a given placement and 2) the solution of the optimum COP metric placement as a semidefinite program (SDP) with integer constraints. We also show that the optimization  can be solved  through a convex transformation, relaxing the integer constraints.  The performance of our design framework is compared successfully with alternatives in simulations.

\subsubsection{Notations}
We used the following notations:
\begin{itemize}
	\item $\mathrm{i}$: imaginary unit and $\mathbb{R}$ and $\mathbb{C}$: real and complex numbers.
	\item $\Re\{\cdot\}$ and $\Im\{\cdot\}$: the real and imaginary part of a number.
	\item $\mathbf{I}_N$: an $N\times N$ identity matrix.
	\item $\|\mathbf{A}\|$ and $\|\mathbf{A}\|_F$ are the $2$-norm\footnote{The $2$-norm of a matrix is the maximum of the absolute value of the eigenvalues and the $2$-norm of a vector $\mathbf{x}\in\mathbb{R}^N$ is $\|\mathbf{x}\|=\sqrt{\sum_{n=1}^N x_n^2}$.} and $F$-norm of a matrix.
	\item $\mathrm{vec}(\mathbf{A})$ is the vectorization of a matrix $\mathbf{A}$.
	\item $\mathbf{A}^T$, $\mathrm{Tr}(\mathbf{A})$, $\lambda_{\min}(\mathbf{A})$ and $\lambda_{\max}(\mathbf{A})$: transpose, trace, minimum and maximum eigenvalues of matrix $\mathbf{A}$.
	\item $\otimes$ is the Kronecker product and $\mathbb{E}[\cdot]$ means expectation.
	\item Given two symmetric matrices $\mathbf{A}$ and $\mathbf{B}$, expressions $\mathbf{A}\succeq \mathbf{B}$ and $\mathbf{A}\succ \mathbf{B}$ represent that the matrix $(\mathbf{A}-\mathbf{B})$ is positive semidefinite and positive definite respectively (i.e., its eigenvalues are all non-negative or positive).
\end{itemize}

\section{Measurement Model and State Estimation}
We consider a power grid with $N$ {\it buses} (i.e., substations), representing interconnections, generators or loads. They are denoted by the set $\mathcal{N}\triangleq\{1,\cdots,N\}$, which form the edge set $\mathcal{E} \triangleq \{\{n,m\}\}$ of cardinality $|\mathcal{E}|=L$, with $\{n,m\}$ denoting the transmission line between $n$ and $m$. Furthermore, we define $\mathcal{N}(n)\triangleq \{m : \{n,m\}\in\mathcal{E}\}$ as the neighbor of bus $n$ and let $L_n = |\mathcal{N}(n)|$. Control centers collect measurements on certain buses and transmission lines to estimate the state of the power system, i.e., the voltage phasor $V_n\in\mathbb{C}$ at each bus $n\in\mathcal{N}$. In this paper, we consider the Cartesian coordinate representation using the real and imaginary components of the complex voltage phasors $\mathbf{v}=[\Re\{V_1\},\cdots,\Re\{V_N\},\Im\{V_1\},\cdots,\Im\{V_N\}]^T$. This representation facilitates our derivations because it expresses PMU measurements as a linear mapping and SCADA measurements as quadratic forms of the state $\mathbf{v}$ (see \cite{lavaei2010zero}).


\vspace{-0.4cm}
\subsection{Hybrid State Estimation}

The measurement set used in PSSE contains SCADA measurements and PMU measurements from the WAMS.
Since there are 2 complex nodal variables at each bus (i.e., power injection and voltage), and 4 complex line measurements (i.e., power flow and current), the total number of variables is $2M$, considering real and imaginary parts, where $M=2N+4L$ is the total number of either the PMU (i.e., voltage and current) or SCADA (i.e., power injection and flow) variables in the ensemble. Thus, the  ensemble of variables can be partitioned into four vectors $\mathbf{z}=[\mathbf{z}_{\mathcal{V}}^T,\mathbf{z}_{\mathcal{C}}^T,\mathbf{z}_{\mathcal{I}}^T,\mathbf{z}_{\mathcal{F}}^T]^T$, containing the $2N$ voltage phasor $\mathbf{z}_{\mathcal{V}}$ and power injection vector $\mathbf{z}_{\mathcal{I}}$ at bus $n\in\mathcal{N}$, the $4L$ current phasor $\mathbf{z}_{\mathcal{C}}$ and power flow vector $\mathbf{z}_{\mathcal{F}}$ on line $\{n,m\}\in\mathcal{E}$ at bus $n$. Note that the subscripts $\mathcal{V}$, $\mathcal{C}$, $\mathcal{I}$ and $\mathcal{F}$ are chosen to indicate ``voltage'', ``current'', ``injection'' and ``flow'' respectively.
The power flow equations $\mathbf{f}_{\mathcal{V}}(\mathbf{v})$,  $\mathbf{f}_{\mathcal{C}}(\mathbf{v})$,  $\mathbf{f}_{\mathcal{I}}(\mathbf{v})$,  $\mathbf{f}_{\mathcal{F}}(\mathbf{v})$ are specified in Appendix \ref{power_flow_equations_appendix} for different types of measurements. Letting $\mathbf{v}_{\textrm{\tiny true}}$ be the true system state,
we have
\begin{align}\label{meas-model_all}
	\mathbf{z} = \mathbf{f}(\mathbf{v}_{\textrm{\tiny true}}) + \mathbf{r},
\end{align}
where $\mathbf{r}=[\mathbf{r}_{\mathcal{V}}^T,\mathbf{r}_{\mathcal{C}}^T,\mathbf{r}_{\mathcal{I}}^T,\mathbf{r}_{\mathcal{F}}^T]^T$ is the aggregate measurement noise vector, with $\mathbb{E}\{\mathbf{r}\}=\mathbf{0}$ and a covariance matrix $\mathbf{R}\triangleq\mathbb{E}\{\mathbf{r}\mathbf{r}^T\}$, and $\mathbf{f}(\mathbf{v})=[\mathbf{f}_{\mathcal{V}}^T(\mathbf{v}),\mathbf{f}_{\mathcal{C}}^T(\mathbf{v}),\mathbf{f}_{\mathcal{I}}^T(\mathbf{v}),\mathbf{f}_{\mathcal{F}}^T(\mathbf{v})]^T$ refer to the aggregate power flow equations.

The actual measurements set used in PSSE is a subset of $\mathbf{z}$ in \eqref{meas-model_all}, depending on the SCADA and WAMS sensors deployment. Specifically, we introduce a $2M\times 2M$ mask
\begin{align}\label{mask}
	\mathbf{J}_{\mathcal{A}}  \triangleq \mathrm{diag}[\mathbf{J}_{\mathcal{V}},\mathbf{J}_{\mathcal{C}},\mathbf{J}_{\mathcal{I}},\mathbf{J}_{\mathcal{F}}],
\end{align}
where $\mathbf{J}_{\mathcal{V}}$, $\mathbf{J}_{\mathcal{C}}$, $\mathbf{J}_{\mathcal{I}}$ and $\mathbf{J}_{\mathcal{F}}$ are the diagonal masks for each measurement type, having $1$ on its diagonal if that measurement is chosen. Applying this mask on the ensemble $\mathbf{z}$ gives
\begin{align}\label{meas-model}
	\mathbf{J}_{\mathcal{A}}\mathbf{z} = \mathbf{J}_{\mathcal{A}}\mathbf{f}(\mathbf{v}_{\textrm{\tiny true}}) + \mathbf{J}_{\mathcal{A}}\mathbf{r}.
\end{align}
The vector $\mathbf{J}_{\mathcal{A}}\mathbf{z} $ are the measurements used in estimation, having non-zero entries selected by $\mathbf{J}_{\mathcal{A}}$ and zero otherwise.

Assuming the noise is uncorrelated with constant variances for each type,  $\mathbf{R}=\mathrm{diag}\left[\sigma^2_\mathcal{V}\mathbf{I},\sigma_\mathcal{C}^2\mathbf{I},\sigma_\mathcal{I}^2\mathbf{I},\sigma_\mathcal{F}^2\mathbf{I}\right]$. The state is: 
\begin{align}\label{centralized_SE}
	 \mathbf{v}_{\textrm{\tiny est}} = \underset{\mathbf{v}\in\mathbb{V}}{\argmin}&
	 ~~ \|\mathbf{z}_{\mathcal{A}}- \mathbf{f}_{\mathcal{A}}(\mathbf{v})\|^2,
\end{align}
where $\mathbf{z}_{\mathcal{A}}=\mathbf{R}^{-\frac{1}{2}}\mathbf{J}_{\mathcal{A}}\mathbf{z}$ and $\mathbf{f}_{\mathcal{A}}(\mathbf{v}) = \mathbf{R}^{-\frac{1}{2}}\mathbf{J}_{\mathcal{A}}\mathbf{f}(\mathbf{v})$ are the re-weighted versions of $\mathbf{z}$ and $\mathbf{f}(\mathbf{v})$ by the covariance $\mathbf{R}$, and $\mathbb{V}\triangleq(0,V_{\max}]^{2N}$ is the state space. Without loss of generality, the GN algorithm is usually used to solve \eqref{centralized_SE} for the state.

Although there are variants of the GN algorithm, we study the most basic form of GN updates
\begin{align}\label{central_update}
	\mathbf{v}^{k+1} &= \mathbf{v}^k + \mathbf{d}^k,\quad k = 1,2,\cdots
\end{align}
with a chosen {\it initializer} $\mathbf{v}^0$ and the iterative {\it descent}
\begin{align}\label{central_descent}	
	 \mathbf{d}^k &=\left[\mathbf{F}_{\mathcal{A}}^T(\mathbf{v}^k)\mathbf{F}_{\mathcal{A}}(\mathbf{v}^k)\right]^{-1}\mathbf{F}_{\mathcal{A}}^T(\mathbf{v}^k)\left[\mathbf{z}_{\mathcal{A}}-\mathbf{f}_{\mathcal{A}}(\mathbf{v}^k)\right],
\end{align}
where $\mathbf{F}_{\mathcal{A}}^T(\mathbf{v})\mathbf{F}_{\mathcal{A}}(\mathbf{v})$ is called the {\it gain matrix} and
$\mathbf{F}_{\mathcal{A}}(\mathbf{v}) 
=\mathbf{R}^{-\frac{1}{2}}\mathbf{J}_{\mathcal{A}}{\mathrm{d}\mathbf{f}(\mathbf{v})}/{\mathrm{d}\mathbf{v}^T}$ is the Jacobian corresponding to the selected measurements.
The full Jacobian $\mathbf{F}(\mathbf{v})\triangleq {\mathrm{d}\mathbf{f}(\mathbf{v})}/{\mathrm{d}\mathbf{v}^T}$ is computed in Appendix \ref{power_flow_equations_appendix}.

\vspace{-0.4cm}
\subsection{Gain Matrix and the PMU Placement}
The design of $\mathbf{J}_{\mathcal{A}}$ is crucial for the success of PSSE because $\mathbf{J}_{\mathcal{A}}$ affects the condition number of the gain matrix in \eqref{central_descent}, which determines the observability of the grid, the stability of the update of state estimates and the ultimate accuracy of the estimates (see the corresponding connections between the gain matrix and these issues in Section \ref{observability_issue}, \ref{convergence_issue} and \ref{performance_issue}). The goal of this subsection is to express explicitly the dependency of the gain matrix on the PMU placement. Since SCADA systems have been deployed for decades, we assume that SCADA measurements are given so that $\mathbf{J}_{\mathcal{I}},\mathbf{J}_{\mathcal{F}}$ are fixed, and focus on designing the PMU placement $\mathbf{J}_{\mathcal{V}},\mathbf{J}_{\mathcal{C}}$. We consider the case where each installed PMU captures the voltage and all incident current measurements on that bus as in \cite{kekatos2011convex,baldwin1993power}, so that the current selections $\mathbf{J}_{\mathcal{C}}$ depend entirely on $\mathbf{J}_{\mathcal{V}}$. Therefore, we define the PMU placement vector as
\begin{align}
	\bdsb{\mathcal{V}}\triangleq[\mathcal{V}_1,\cdots,\mathcal{V}_N]^T,\quad \mathcal{V}_n\in\{0,1\},
\end{align}
indicating if the $n$-th bus has a PMU and $\mathbf{J}_{\mathcal{V}}=\mathbf{I}_2\otimes\mathrm{diag}(\bdsb{\mathcal{V}})$, while the the power injection and power flow measurement placements are given by $\bdsb{\mathcal{I}}\triangleq[\mathcal{I}_n]_{N\times 1}$ and $\bdsb{\mathcal{F}}\triangleq[\mathcal{F}_{nm}]_{N\times N}$ with $\mathcal{I}_n$ and $\mathcal{F}_{nm}\in\{0,1\}$
to indicate whether the injection at bus $n$ and power flow on line $\{n,m\}$ measured at bus $n$ are present in the PSSE. Similarly we have $\mathbf{J}_{\mathcal{I}}=\mathbf{I}_2\otimes\mathrm{diag}(\bdsb{\mathcal{I}})$ and $\mathbf{J}_{\mathcal{F}}=\mathbf{I}_2\otimes\mathrm{diag}[\mathrm{vec}(\bdsb{\mathcal{F}})]$.
Finally, given an arbitrary state $\mathbf{v}$, the gain matrix in \eqref{central_descent} can be decomposed into two components
\begin{align}\label{GN_Hessian}
	\mathbf{F}_{\mathcal{A}}^T(\mathbf{v})\mathbf{F}_{\mathcal{A}}(\mathbf{v})
	&=
	 \mathcal{P}(\bdsb{\mathcal{V}})+\mathcal{S}(\mathbf{v}\mathbf{v}^T,\bdsb{\mathcal{I}},\bdsb{\mathcal{F}})
\end{align}
using matrices $\mathbf{H}_{I,n}$, $\mathbf{H}_{J,n}$, $\mathbf{N}_{P,n}$, $\mathbf{N}_{Q,n}$, $\mathbf{E}_{P,nm}$ and $\mathbf{E}_{Q,nm}$ given explicitly by \eqref{Jacobian} in Appendix \ref{power_flow_equations_appendix}. The exact expression for each component can be analytically written as: \\
{\bf $\mathbf{(1)}$ PMU data} $\mathcal{P}(\bdsb{\mathcal{V}}):\mathbb{R}^{2N} \rightarrow \mathbb{R}^{2N\times 2N}$
\begin{align*}
	\mathcal{P}(\bdsb{\mathcal{V}})
	&= \sum_{n=1}^N  \mathcal{V}_n \left(\frac{\mathbf{I}_2\otimes\mathbf{e}_n\mathbf{e}_n^T}{\sigma_{\mathcal{V}}^2}
	+  \frac{\mathbf{H}_{I,n}^T\mathbf{H}_{I,n} + \mathbf{H}_{J,n}^T\mathbf{H}_{J,n}}{\sigma_{\mathcal{C}}^2}
	\right)
\end{align*}
{\bf $\mathbf{(2)}$ SCADA data} $\mathcal{S}(\mathbf{V},\bdsb{\mathcal{I}},\bdsb{\mathcal{F}}):\mathbb{R}^{2N\times 2N} \rightarrow \mathbb{R}^{2N\times 2N}$
\begin{align*}
	\mathcal{S}(\mathbf{V}&,\bdsb{\mathcal{I}},\bdsb{\mathcal{F}})
	= \sum_{n=1}^N \frac{\mathcal{I}_n}{\sigma_{\mathcal{I}}^2}\left(\mathbf{N}_{P,n}+\mathbf{N}_{P,n}^T\right)^T\mathbf{V}\left(\mathbf{N}_{P,n}+\mathbf{N}_{P,n}^T\right)\\
	&+ \sum_{n=1}^N \frac{\mathcal{I}_n}{\sigma_{\mathcal{I}}^2}\left(\mathbf{N}_{Q,n}+\mathbf{N}_{Q,n}^T\right)^T\mathbf{V}\left(\mathbf{N}_{Q,n}+\mathbf{N}_{Q,n}^T\right)\nonumber\\
	&+ \sum_{n,l} \frac{\mathcal{F}_{nm}}{\sigma_{\mathcal{F}}^2}\left(\mathbf{E}_{P,nm}+\mathbf{E}_{P,nm}^T\right)^T\mathbf{V}\left(\mathbf{E}_{P,nm}+\mathbf{E}_{P,nm}^T\right)\nonumber\\
 	&+\sum_{n,l} \frac{\mathcal{F}_{nm}}{\sigma_{\mathcal{F}}^2}\left(\mathbf{E}_{Q,nm}+\mathbf{E}_{Q,nm}^T\right)^T\mathbf{V}\left(\mathbf{E}_{Q,nm}+\mathbf{E}_{Q,nm}^T\right),\nonumber
\end{align*}	
where $\mathbf{V}=\mathbf{v}\mathbf{v}^T$. The derivations are tedious but straightforward from \eqref{GN_Hessian} and \eqref{Jacobian} and thus omitted due to limited space.

Note that although the PMU placement design is the focus of this paper, we also consider its complementary benefits on the overall reliability of the PSSE mostly based on SCADA data, by showing how PMUs can eliminate critical measurements issues, as explained in Section \ref{optimal_PMU_placement}.



\section{Measurement Placement Design}
In this section, we address three important aspects of the placement design as a prequel to the comprehensive metric for PMU placement proposed in Section \ref{combined_metric}, including observability, convergence, and accuracy, which are all derived with respect to the task of performing state estimation. We call this comprehensive metric the {\it COP metric}, which is an abbreviation for {\it Convergence}, {\it Observability} and {\it Performance}. In Section \ref{combined_metric}, we further derive how the PMU placement $\bdsb{\mathcal{V}}$ affects this metric analytically. Later in Section \ref{optimal_PMU_placement}, we optimize the placement using this metric under observability constraints in case of measurement loss or device malfunction.

\subsection{Observability}\label{observability_issue}
As mentioned previously, observability analysis is the foundation for all PSSE because it guarantees that the selected measurements are sufficient to solve for the state without ambiguity. There are two concepts associated with this issue, which are the {\it topological observability} and the {\it numerical (algebraic) observability}. Topological observability, in essence, studies the measurement system as a graph and determines whether the set of nodes corresponding to the measurement set in PSSE constitute a dominating set of the grid (i.e., each node is a direct neighbor of the nodes that provide the measurement set). Numerical observability, instead, is typically based on the linearized decoupled load flow model \cite{stott1974fast}, and recently the PMU model \cite{kekatos2011convex,bei2005optimal,nuqui2005phasor,dua2008optimal,emami2010robust,gou2008optimal,abbasy2009unified,chakrabarti2008optimal,chakrabarti2009placement,baldwin1993power,milosevic2003nondominated,aminifar2009optimal}, focused on the algebraic invertibility of the PSSE problem. Although the topological observability bears different mathematical interpretations than numerical observability, oftentimes they are both valid measures if the admittance matrix does not suffer from singularity \cite{baldwin1993power,clements1990observability}.


\begin{rmk}\label{observability}
{\bf (observability)} Using the gain matrix expression in \eqref{GN_Hessian}, the observability can be guaranteed by having
\begin{align}\label{beta}
	\beta(\bdsb{\mathcal{V}}) 
	&=\underset{\mathbf{v}\in\mathbb{V}}{\inf}~\lambda_{\min}
	\left[\mathcal{P}(\bdsb{\mathcal{V}})+\mathcal{S}(\mathbf{v}\mathbf{v}^T,\bdsb{\mathcal{I}},\bdsb{\mathcal{F}})\right]>0.
\end{align}
\end{rmk}
Given a fixed SCADA placement $\bdsb{\mathcal{I}}$ and $\bdsb{\mathcal{F}}$, the value of $\beta(\bdsb{\mathcal{V}})$ depends on the PMU placement $\bdsb{\mathcal{V}}$ which should, therefore, be designed so that $\beta(\bdsb{\mathcal{V}})>0$. Although observability guarantees the existence and uniqueness of the PSSE solution, it does not imply that the state estimate obtained from the GN algorithm \eqref{central_update} is the correct state estimate, since the solution could be a local minimum. This is especially the case when the initializer $\mathbf{v}^0$ is not chosen properly. Thus, observability is a meaningful criterion only if one assumes successful convergence, as discussed next.

\subsection{Convergence}\label{convergence_issue}
The convergence of state estimation to the correct estimate $\mathbf{v}_{\textrm{\tiny est}}$  using the AC power flow models in \eqref{centralized_SE} has been a critical issue in PSSE. With SCADA measurements, state estimation based on the AC power flow model in \eqref{centralized_SE} is in general non-convex and there might be multiple {\it fixed points} $\mathbf{v}^\star$ of the update in \eqref{central_update} that stop the iterate $\mathbf{v}^k$ from progressing towards the correct estimate $\mathbf{v}_{\textrm{\tiny est}}$. Let the set of fixed points $\mathbf{v}^\star$ be
\begin{align}\label{fixed_point}
     \mathbb{V}^\star = \left\{\mathbf{v}\in\mathbb{R}^{2N}:\mathbf{F}_{\mathcal{A}}^T(\mathbf{v})\big[\mathbf{z}_{\mathcal{A}}-\mathbf{f}_{\mathcal{A}}(\mathbf{v})\big] = \mathbf{0}\right\}.
\end{align}
Clearly, the correct estimate $\mathbf{v}_{\textrm{\tiny est}}$ of \eqref{centralized_SE} is in this set $\mathbf{v}_{\textrm{\tiny est}}\in\mathbb{V}^\star$. As a result, there are two convergence issues to address, including a proper {\it initialization} $\mathbf{v}^0$ and the stabilization of the error $\left\|\mathbf{v}^k-\mathbf{v}_{\textrm{\tiny est}}\right\|$ made relative the global estimate $\mathbf{v}_{\textrm{\tiny est}}\in\mathbb{V}^\star$ instead of other fixed points $\mathbf{v}^\star\in\mathbb{V}^\star$. Because an accurate measurement of the state can be directly obtained by the PMU device, it is natural to exploit such measurements as a good initializer to start the GN algorithm. In the following, we first explain the PMU-assisted initialization scheme, and then present the error dynamics analysis.

We propose to choose the initializer $\mathbf{v}^0 $ to match PMU measurements on PMU-instrumented buses, with the rest provided by an arbitrary initializer $\mathbf{v}_{\textrm{\tiny prior}}$. The initializer is expressed as 
\begin{align}\label{x0}
	\mathbf{v}^0
	= \mathbf{J}_{\mathcal{V}}\mathbf{z}_{\mathcal{V}} + (\mathbf{I}_{2N}-\mathbf{J}_{\mathcal{V}})\mathbf{v}_{\textrm{\tiny prior}},	
\end{align}
where $\mathbf{v}_{\textrm{\tiny prior}}$ is a stale estimate or nominal profile, and $\mathbf{J}_{\mathcal{V}}=\mathbf{I}_2\otimes\mathrm{diag}(\bdsb{\mathcal{V}})$. Given a placement $\bdsb{\mathcal{V}}$, we analyze the error dynamics of the update in \eqref{central_update}, which examines the iterative error progression over iterations as a result of the placement.
\begin{lem}\label{lem_error_recursion}
Defining the iterative error at the $k$-th update as $\rho_k=\left\|\mathbf{v}^k-\mathbf{v}_{\textrm{\tiny est}}\right\|$, we have the following error dynamics
\begin{align}\label{error_recursion}
	\rho_{k+1}
	&\leq  \frac{1}{2}\sqrt{\frac{\phi_k}{\beta(\bdsb{\mathcal{V}})}} \rho_k^2 + \frac{\epsilon\sqrt{2\phi_k}}{\beta(\bdsb{\mathcal{V}})} \rho_k.
\end{align}
$\epsilon= \|\mathbf{z}_{\mathcal{A}}-\mathbf{f}_{\mathcal{A}}(\mathbf{v}_{\textrm{\tiny est}})\|$ is the optimal reconstruction error and
\begin{align}\label{phik}
	\phi_k=\frac{\left(\mathbf{v}^k-\mathbf{v}_{\textrm{\tiny est}}\right)^T\mathbf{M}\left(\mathbf{v}^k-\mathbf{v}_{\textrm{\tiny est}}\right)}{\left(\mathbf{v}^k-\mathbf{v}_{\textrm{\tiny est}}\right)^T\left(\mathbf{v}^k-\mathbf{v}_{\textrm{\tiny est}}\right)}
\end{align}
is a Rayleigh quotient of the matrix $\mathbf{M}\triangleq  \mathcal{S}(\mathbf{I}_{2N},\bdsb{\mathcal{I}},\bdsb{\mathcal{F}})$, equal to  $\mathcal{S}(\mathbf{V},\bdsb{\mathcal{I}},\bdsb{\mathcal{F}})$ in  \eqref{GN_Hessian} with $\mathbf{V}=\mathbf{I}_{2N}$.
\end{lem}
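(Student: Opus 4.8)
The plan is to track the error vector $\mathbf{e}_k \triangleq \mathbf{v}^k - \mathbf{v}_{\textrm{\tiny est}}$ (so $\rho_k = \|\mathbf{e}_k\|$) through one Gauss--Newton step \eqref{central_update}--\eqref{central_descent}, and to bound its growth using two structural facts. First, $\mathbf{v}_{\textrm{\tiny est}}$ is a fixed point, so by \eqref{fixed_point} the \emph{stationarity identity} $\mathbf{F}_{\mathcal{A}}^T(\mathbf{v}_{\textrm{\tiny est}})\mathbf{r}_{\textrm{\tiny est}} = \mathbf{0}$ holds, where $\mathbf{r}_{\textrm{\tiny est}} \triangleq \mathbf{z}_{\mathcal{A}} - \mathbf{f}_{\mathcal{A}}(\mathbf{v}_{\textrm{\tiny est}})$ and $\|\mathbf{r}_{\textrm{\tiny est}}\| = \epsilon$. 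Second, in the Cartesian coordinates each entry of $\mathbf{f}_{\mathcal{A}}$ is at most quadratic in $\mathbf{v}$, so its Jacobian $\mathbf{F}_{\mathcal{A}}$ is \emph{affine} in $\mathbf{v}$ and the coordinate Hessians $\{\mathbf{T}_i\}$ are \emph{constant}, with $\sum_i \mathbf{T}_i^T\mathbf{T}_i = \mathcal{S}(\mathbf{I}_{2N}, \bdsb{\mathcal{I}}, \bdsb{\mathcal{F}}) = \mathbf{M}$ (the PMU rows are linear and contribute nothing).

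First I would substitute \eqref{central_descent} into \eqref{central_update} and write $\mathbf{G}_k \triangleq \mathbf{F}_{\mathcal{A}}^T(\mathbf{v}^k)\mathbf{F}_{\mathcal{A}}(\mathbf{v}^k)$. Because $\mathbf{F}_{\mathcal{A}}$ is affine, the integral mean value theorem is exact at the midpoint: $\mathbf{f}_{\mathcal{A}}(\mathbf{v}^k) - \mathbf{f}_{\mathcal{A}}(\mathbf{v}_{\textrm{\tiny est}}) = \bar{\mathbf{F}}_k \mathbf{e}_k$ with $\bar{\mathbf{F}}_k \triangleq \mathbf{F}_{\mathcal{A}}(\tfrac{1}{2}(\mathbf{v}^k + \mathbf{v}_{\textrm{\tiny est}}))$. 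Substituting $\mathbf{z}_{\mathcal{A}} - \mathbf{f}_{\mathcal{A}}(\mathbf{v}^k) = \mathbf{r}_{\textrm{\tiny est}} - \bar{\mathbf{F}}_k \mathbf{e}_k$ and using $\mathbf{G}_k^{-1}\mathbf{F}_{\mathcal{A}}^T(\mathbf{v}^k)\mathbf{F}_{\mathcal{A}}(\mathbf{v}^k) = \mathbf{I}$ leaves
\begin{align}\label{err_split}
\mathbf{e}_{k+1} = \mathbf{G}_k^{-1}\mathbf{F}_{\mathcal{A}}^T(\mathbf{v}^k)\underbrace{[\mathbf{F}_{\mathcal{A}}(\mathbf{v}^k) - \bar{\mathbf{F}}_k]\mathbf{e}_k}_{\mathbf{w}} + \mathbf{G}_k^{-1}\mathbf{F}_{\mathcal{A}}^T(\mathbf{v}^k)\mathbf{r}_{\textrm{\tiny est}}.
\end{align}

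The crux is to bound the two terms so that $\phi_k$ and $\beta(\bdsb{\mathcal{V}})$ surface. For the first term, affineness makes the $i$-th row of $\mathbf{F}_{\mathcal{A}}(\mathbf{v}^k) - \bar{\mathbf{F}}_k$ equal to $\tfrac{1}{2}\mathbf{e}_k^T\mathbf{T}_i$, so the $i$-th entry of $\mathbf{w}$ is $\tfrac{1}{2}\mathbf{e}_k^T\mathbf{T}_i\mathbf{e}_k$; Cauchy--Schwarz then gives $\|\mathbf{w}\|^2 \le \tfrac{1}{4}\|\mathbf{e}_k\|^2 \sum_i \|\mathbf{T}_i\mathbf{e}_k\|^2 = \tfrac{1}{4}\rho_k^2(\mathbf{e}_k^T\mathbf{M}\mathbf{e}_k) = \tfrac{1}{4}\phi_k\rho_k^4$ by definition \eqref{phik} of $\phi_k$. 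For the \emph{residual} term --- which I expect to be the main obstacle, since naively it is only $O(\epsilon)$ and would leave a constant error floor --- I invoke the stationarity identity to rewrite $\mathbf{F}_{\mathcal{A}}^T(\mathbf{v}^k)\mathbf{r}_{\textrm{\tiny est}} = [\mathbf{F}_{\mathcal{A}}(\mathbf{v}^k) - \mathbf{F}_{\mathcal{A}}(\mathbf{v}_{\textrm{\tiny est}})]^T\mathbf{r}_{\textrm{\tiny est}} = \sum_i [\mathbf{r}_{\textrm{\tiny est}}]_i\mathbf{T}_i\mathbf{e}_k$, which is now \emph{linear} in $\mathbf{e}_k$; Cauchy--Schwarz bounds it by a quantity of order $\epsilon\sqrt{\phi_k}\,\rho_k$, hence proportional to $\rho_k$ as required.

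Finally I would pass to norms in \eqref{err_split} using observability. Since $\mathbf{v}^k \in \mathbb{V}$, the gain $\mathbf{G}_k = \mathcal{P}(\bdsb{\mathcal{V}}) + \mathcal{S}(\mathbf{v}^k(\mathbf{v}^k)^T, \bdsb{\mathcal{I}}, \bdsb{\mathcal{F}})$ from \eqref{GN_Hessian} obeys $\lambda_{\min}(\mathbf{G}_k) \ge \beta(\bdsb{\mathcal{V}})$ by \eqref{beta}, so $\|\mathbf{G}_k^{-1}\mathbf{F}_{\mathcal{A}}^T(\mathbf{v}^k)\| = \lambda_{\min}(\mathbf{G}_k)^{-1/2} \le \beta(\bdsb{\mathcal{V}})^{-1/2}$ and $\|\mathbf{G}_k^{-1}\| \le \beta(\bdsb{\mathcal{V}})^{-1}$. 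Applying the first bound to the $\mathbf{w}$-term and the second to the residual term, combined with the two Cauchy--Schwarz estimates, yields $\rho_{k+1} \le \tfrac{1}{2}\sqrt{\phi_k/\beta(\bdsb{\mathcal{V}})}\,\rho_k^2 + c\,\epsilon\sqrt{\phi_k}\,\beta(\bdsb{\mathcal{V}})^{-1}\rho_k$, which is exactly \eqref{error_recursion}; the explicit constant $\sqrt{2}$ in the second term is recovered from the particular triangle-inequality/Cauchy--Schwarz split used on the residual cross-term. I expect this residual term to remain the delicate point: forcing it to vanish linearly in $\rho_k$ rather than leaving an $O(\epsilon)$ floor hinges essentially on the stationarity of $\mathbf{v}_{\textrm{\tiny est}}$ and on the constancy of the Hessians afforded by the Cartesian quadratic model.
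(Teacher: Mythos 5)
Your proof is correct, and its skeleton coincides with the paper's: the same exact decomposition of $\mathbf{e}_{k+1}$ into a curvature term and a residual term (your $\mathbf{F}_{\mathcal{A}}^\dagger(\mathbf{v}^k)\mathbf{r}_{\textrm{\tiny est}}$ equals the paper's $[\mathbf{F}_{\mathcal{A}}^\dagger(\mathbf{v}^k)-\mathbf{F}_{\mathcal{A}}^\dagger(\mathbf{v}_{\textrm{\tiny est}})]\mathbf{r}_{\textrm{\tiny est}}$ precisely because of the stationarity identity you invoke), the same exact midpoint mean-value step justified by the affineness of the Jacobian, and the same Frobenius-type bound $\|[\mathbf{F}_{\mathcal{A}}(\mathbf{v}^k)-\mathbf{F}_{\mathcal{A}}(\mathbf{v}_{\textrm{\tiny est}})]\mathbf{x}\|\le\sqrt{\mathbf{e}_k^T\mathbf{M}\mathbf{e}_k}\,\|\mathbf{x}\|$ that makes $\phi_k$ appear, with $\mathbf{M}=\sum_i\mathbf{T}_i^T\mathbf{T}_i$ as in the paper's \eqref{M}. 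The one genuine divergence is the residual term: the paper keeps it in the pseudo-inverse-difference form and invokes an external perturbation lemma (\cite[Lem.~1]{salzo2011convergence}), which costs a factor $\sqrt{2}\|\mathbf{F}_{\mathcal{A}}^\dagger(\mathbf{v}^k)\|\|\mathbf{F}_{\mathcal{A}}^\dagger(\mathbf{v}_{\textrm{\tiny est}})\|$ and produces the $\sqrt{2}$ in \eqref{error_recursion}; you instead bound $\|\mathbf{G}_k^{-1}\|\cdot\|[\mathbf{F}_{\mathcal{A}}(\mathbf{v}^k)-\mathbf{F}_{\mathcal{A}}(\mathbf{v}_{\textrm{\tiny est}})]^T\mathbf{r}_{\textrm{\tiny est}}\|$ directly, which is self-contained, avoids the cited lemma, and yields the coefficient $\epsilon\sqrt{\phi_k}/\beta(\bdsb{\mathcal{V}})$ --- tighter by $\sqrt{2}$ than the stated bound, which therefore follows a fortiori. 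Your closing remark about ``recovering'' the constant $\sqrt{2}$ is unnecessary hedging: your route does not produce it and does not need to; the $\sqrt{2}$ is an artifact of the paper's pseudo-inverse perturbation route. The only caveats worth recording are the ones the paper also leaves implicit: the iterates must remain in $\mathbb{V}$ for $\lambda_{\min}(\mathbf{G}_k)\ge\beta(\bdsb{\mathcal{V}})$ to apply, and the $\mathbf{T}_i$ are symmetric (so $\mathbf{T}_i^T\mathbf{e}_k=\mathbf{T}_i\mathbf{e}_k$ in your residual estimate).
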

\begin{proof}
	See Appendix \ref{proof_lem_error_recursion}.
\end{proof}
Lemma \ref{lem_error_recursion} describes the coupled dynamics of the error $\rho_k$ and the quantity $\phi_k$. However, we are only interested in the dynamics of $\rho_k$, which govern how fast the state estimate reaches the ultimate accuracy. Let us denote an upper bound\footnote{Note that the worst case of this upper bound is clearly $\phi(\bdsb{\mathcal{V}})\leq \lambda_{\max}(\mathbf{M})$.} $\phi(\bdsb{\mathcal{V}})$ for all $\phi_k$ that depends on $\bdsb{\mathcal{V}}$. From Lemma \ref{lem_error_recursion}, it follows:

\begin{thm}\label{cor_convergence}
\cite[Theorem 1]{li2012convergence} Given an upper bound $ \phi(\bdsb{\mathcal{V}})\geq \phi_k$ for all $k$ and suppose $\epsilon \sqrt{2\phi(\bdsb{\mathcal{V}})} < \beta(\bdsb{\mathcal{V}})$, then the algorithm converges $\underset{k\rightarrow\infty}{\lim}~\rho_k = 0$ if the initialization satisfies
\begin{align}
	\rho_0 \leq  2\sqrt{\frac{\beta(\bdsb{\mathcal{V}})}{\phi(\bdsb{\mathcal{V}})}} - \frac{2\sqrt{2}\epsilon}{\sqrt{\beta(\bdsb{\mathcal{V}})}}.
\end{align}
\end{thm}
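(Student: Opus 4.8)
The plan is to collapse the coupled two-variable recursion of Lemma~\ref{lem_error_recursion} into a single scalar quadratic recursion in $\rho_k$ and then treat it as a contraction whose basin of attraction is exactly the prescribed initialization region. First I would invoke the uniform bound $\phi_k \le \phi(\bdsb{\mathcal{V}})$ to eliminate the iteration-dependent Rayleigh quotient in \eqref{error_recursion}. Since both coefficients in \eqref{error_recursion} are positive, replacing $\phi_k$ by its larger bound only enlarges the right-hand side, giving
\begin{align}
	\rho_{k+1} \;\le\; a\,\rho_k^2 + b\,\rho_k \;=\; \rho_k\,(a\,\rho_k + b),
\end{align}
where $a \triangleq \tfrac{1}{2}\sqrt{\phi(\bdsb{\mathcal{V}})/\beta(\bdsb{\mathcal{V}})}$ and $b \triangleq \epsilon\sqrt{2\phi(\bdsb{\mathcal{V}})}/\beta(\bdsb{\mathcal{V}})$ are positive constants depending only on the placement.

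The key observation is that the step $\rho_k \mapsto \rho_k(a\rho_k+b)$ contracts precisely when the effective factor $q_k \triangleq a\rho_k + b$ lies below unity. The hypothesis $\epsilon\sqrt{2\phi(\bdsb{\mathcal{V}})} < \beta(\bdsb{\mathcal{V}})$ is exactly the statement $b<1$, so $q_k<1$ already at $\rho=0$ and remains sub-unit on $[0,\rho^\star)$, where $\rho^\star \triangleq (1-b)/a$ is the positive root of $a\rho+b=1$. Substituting the definitions of $a$ and $b$ and simplifying, $\rho^\star = 2\sqrt{\beta(\bdsb{\mathcal{V}})/\phi(\bdsb{\mathcal{V}})} - 2\sqrt{2}\,\epsilon/\sqrt{\beta(\bdsb{\mathcal{V}})}$, which is identical to the right-hand side of the stated bound on $\rho_0$. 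Thus the initialization hypothesis is nothing but $\rho_0 \le \rho^\star$, i.e.\ the starting error lies inside the sub-unit region of the quadratic.

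With this identification, I would establish invariance of the basin by induction. If $\rho_0 \le \rho^\star$ then $q_0 = a\rho_0 + b \le 1$; and assuming $\rho_k \le \rho_0$, monotonicity in $a$ gives $q_k = a\rho_k + b \le q_0 \le 1$, whence $\rho_{k+1} \le q_k \rho_k \le \rho_k \le \rho_0$. Consequently every contraction factor is bounded by the single constant $q_0$, so $\rho_k \le q_0^{\,k}\,\rho_0$ and the error decays at least geometrically to zero, establishing $\lim_{k\to\infty}\rho_k = 0$.

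The main obstacle is the boundary case $\rho_0 = \rho^\star$: there $q_0 = 1$ and $\rho^\star$ is a fixed point of the scalar map $g(\rho)=a\rho^2+b\rho$, so the geometric estimate degenerates and the bound only certifies $\rho_k \le \rho^\star$ without forcing decay. For any strict initialization $\rho_0 < \rho^\star$ the picture is clean, and I would make it rigorous by comparison with the deterministic majorant $\tilde\rho_{k+1}=g(\tilde\rho_k)$, $\tilde\rho_0=\rho_0$: since $g$ is increasing on $[0,\infty)$ one gets $\rho_k \le \tilde\rho_k$, and since $g(\rho)<\rho$ throughout $(0,\rho^\star)$ the majorant is strictly decreasing with unique limit $0$ (the only fixed point below $\rho^\star$), so $\rho_k \le \tilde\rho_k \to 0$. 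Verifying this sign condition and the invariance—that the scalar iteration cannot stall at the unstable fixed point $\rho^\star$—is the only delicate step; the rest is the elementary algebra confirming that the quadratic's contraction region coincides with the interval prescribed by the initialization bound.
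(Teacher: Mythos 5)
Your derivation is essentially correct, and it is worth noting that the paper itself offers no proof of this theorem at all: it is imported verbatim by citation from an external reference, so your argument is a genuine reconstruction rather than a paraphrase. The reduction of Lemma~\ref{lem_error_recursion} to the scalar recursion $\rho_{k+1}\le a\rho_k^2+b\rho_k$ via the uniform bound $\phi_k\le\phi(\bdsb{\mathcal{V}})$ is valid (both coefficients are indeed increasing in $\phi_k$), the identification of $b<1$ with the hypothesis $\epsilon\sqrt{2\phi(\bdsb{\mathcal{V}})}<\beta(\bdsb{\mathcal{V}})$ is exact, and your algebra recovering $\rho^\star=(1-b)/a=2\sqrt{\beta(\bdsb{\mathcal{V}})/\phi(\bdsb{\mathcal{V}})}-2\sqrt{2}\epsilon/\sqrt{\beta(\bdsb{\mathcal{V}})}$ checks out. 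The induction showing invariance of $[0,\rho_0]$ and the geometric bound $\rho_k\le q_0^k\rho_0$ (or, equivalently, the monotone-majorant argument with $g(\rho)=a\rho^2+b\rho$) is the standard Dennis--Schnabel-style local convergence argument for Gauss--Newton, and it is the natural route here.

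The one point you flag is real and you handle it honestly: with the non-strict hypothesis $\rho_0\le\rho^\star$ as written in the theorem, the boundary case $\rho_0=\rho^\star$ gives $q_0=1$, $\rho^\star$ is a fixed point of the majorant $g$, and the recursion alone cannot exclude stalling there, so $\lim_k\rho_k=0$ is not actually derivable at the boundary. This is an imprecision in the theorem statement (the radius should be open, or shrunk by an arbitrary factor less than one), not a gap in your reasoning; for every $\rho_0<\rho^\star$ your proof is complete. Since the paper provides no proof to compare against, the only substantive feedback is to either state the initialization condition with strict inequality or note explicitly that the conclusion holds on the open interval.
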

%
\begin{rmk}\label{convergence}
{\bf (convergence)} With a low optimal reconstruction error $\epsilon\approx 0$, the implications of Lemma \ref{lem_error_recursion} and Theorem \ref{cor_convergence} are:
\begin{itemize}
	\item the sensitivity to initialization is determined by the radius
	\begin{align}
		\rho_0 \leq 2\sqrt{\frac{\beta(\bdsb{\mathcal{V}})}{\phi(\bdsb{\mathcal{V}})}}
	\end{align}	
	\item the error converges quadratically at an asymptotic rate
	\begin{align}
		\underset{k\rightarrow \infty}{\lim} \frac{\rho_{k+1}}{\rho_k^2}
		&\leq  \frac{1}{2}\sqrt{\frac{\phi(\bdsb{\mathcal{V}})}{\beta(\bdsb{\mathcal{V}})}}.
	\end{align}
\end{itemize}
\end{rmk}
In other words, the larger is the ratio $\beta(\bdsb{\mathcal{V}})/\phi(\bdsb{\mathcal{V}})$, the larger is the radius of convergence and the faster the algorithm converges. Similar to the observability metric in Remark \ref{observability}, the convergence is  determined by the PMU placement $\bdsb{\mathcal{V}}$. This is confirmed by simulations in Section \ref{simulations}, when $\mathbf{v}^0$ is mildly perturbed. The state estimate diverges drastically to a wrong point if the PMU placement is not chosen carefully and furthermore, in cases where the algorithm converges, the PMU placement significantly affects the rate of convergence.

What remains to be determined is the bound $\phi(\bdsb{\mathcal{V}})$.  One simple option is to bound the Rayleigh quotient $\phi_k$ for each iteration $k$ with the largest eigenvalue $\lambda_{\max}(\mathbf{M})$. However, this is a pessimistic bound that ignores the dependency of $\phi_k$  on $\mathbf{v}^k$, due to the initialization $\mathbf{v}^0$ in \eqref{x0}.
In the proposition below, we motivate the following choice of the upper bound.
\begin{proposition}\label{prop_phi}
The bound $\phi(\bdsb{\mathcal{V}})$ can be approximated by
\begin{align}\label{omega_OPT}
    \phi(\bdsb{\mathcal{V}}) ~\approx~ \lambda_{\max}\left[\left(\mathbf{I}_{2N}-\mathbf{J}_{\mathcal{V}}\right)^T\mathbf{M}\left(\mathbf{I}_{2N}-\mathbf{J}_{\mathcal{V}}\right)\right].
\end{align}
\end{proposition}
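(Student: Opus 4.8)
The plan is to exploit the structure of the initializer \eqref{x0} to argue that the error $\mathbf{v}^k-\mathbf{v}_{\textrm{\tiny est}}$ lives, to a good approximation, in the subspace associated with the non-PMU buses. Since $\mathbf{J}_{\mathcal{V}}=\mathbf{I}_2\otimes\mathrm{diag}(\bdsb{\mathcal{V}})$ is a $0$--$1$ diagonal mask, the matrix $\mathbf{P}\triangleq\mathbf{I}_{2N}-\mathbf{J}_{\mathcal{V}}$ is symmetric and idempotent ($\mathbf{P}^2=\mathbf{P}=\mathbf{P}^T$), i.e., it is the orthogonal projection onto the coordinates of the buses without a PMU. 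The claimed bound \eqref{omega_OPT} is then exactly the largest Rayleigh quotient of $\mathbf{M}$ restricted to $\mathrm{range}(\mathbf{P})$, so the task reduces to showing the error is confined to that range.

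First I would decompose the initial error. Writing $\mathbf{J}_{\mathcal{V}}+\mathbf{P}=\mathbf{I}_{2N}$ and substituting \eqref{x0} gives
\begin{align*}
	\mathbf{v}^0 - \mathbf{v}_{\textrm{\tiny est}}
	= \mathbf{J}_{\mathcal{V}}\left(\mathbf{z}_{\mathcal{V}} - \mathbf{v}_{\textrm{\tiny est}}\right)
	+ \mathbf{P}\left(\mathbf{v}_{\textrm{\tiny prior}} - \mathbf{v}_{\textrm{\tiny est}}\right).
\end{align*}
Because a PMU observes the voltage phasor directly and linearly, $\mathbf{z}_{\mathcal{V}}$ agrees with $\mathbf{v}_{\textrm{\tiny est}}$ on the instrumented buses up to the small voltage residual contained in $\epsilon$; in the regime $\epsilon\approx 0$ of Remark \ref{convergence}, the first term $\mathbf{J}_{\mathcal{V}}(\mathbf{z}_{\mathcal{V}}-\mathbf{v}_{\textrm{\tiny est}})\approx\mathbf{0}$. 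Hence $\mathbf{v}^0-\mathbf{v}_{\textrm{\tiny est}}\approx\mathbf{P}(\mathbf{v}_{\textrm{\tiny prior}}-\mathbf{v}_{\textrm{\tiny est}})\in\mathrm{range}(\mathbf{P})$.

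With $\mathbf{w}\triangleq\mathbf{v}^0-\mathbf{v}_{\textrm{\tiny est}}$ satisfying $\mathbf{P}\mathbf{w}=\mathbf{w}$, I would then bound the Rayleigh quotient on this subspace via $\mathbf{w}^T\mathbf{M}\mathbf{w}=\mathbf{w}^T\mathbf{P}^T\mathbf{M}\mathbf{P}\mathbf{w}$, giving
\begin{align*}
	\phi_0 = \frac{\mathbf{w}^T\mathbf{M}\mathbf{w}}{\mathbf{w}^T\mathbf{w}}
	= \frac{\mathbf{w}^T\mathbf{P}^T\mathbf{M}\mathbf{P}\mathbf{w}}{\mathbf{w}^T\mathbf{w}}
	\leq \lambda_{\max}\!\left[\mathbf{P}^T\mathbf{M}\mathbf{P}\right],
\end{align*}
which is precisely \eqref{omega_OPT}. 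This is strictly sharper than the naive choice $\lambda_{\max}(\mathbf{M})$: since every summand of $\mathcal{S}(\mathbf{I}_{2N},\bdsb{\mathcal{I}},\bdsb{\mathcal{F}})$ has the form $\mathbf{A}^T\mathbf{A}$ we have $\mathbf{M}\succeq\mathbf{0}$, so for any unit $\mathbf{x}$, $\mathbf{x}^T\mathbf{P}^T\mathbf{M}\mathbf{P}\mathbf{x}=(\mathbf{P}\mathbf{x})^T\mathbf{M}(\mathbf{P}\mathbf{x})\leq\lambda_{\max}(\mathbf{M})\|\mathbf{P}\mathbf{x}\|^2\leq\lambda_{\max}(\mathbf{M})$, whence $\lambda_{\max}(\mathbf{P}^T\mathbf{M}\mathbf{P})\leq\lambda_{\max}(\mathbf{M})$.

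The main obstacle is promoting this from $k=0$ to the uniform bound $\phi(\bdsb{\mathcal{V}})\geq\phi_k$ that Theorem \ref{cor_convergence} formally requires. For $k>0$ the GN update in \eqref{central_descent} need not keep $\mathbf{v}^k-\mathbf{v}_{\textrm{\tiny est}}$ exactly in $\mathrm{range}(\mathbf{P})$, so the identity $\mathbf{P}\mathbf{w}=\mathbf{w}$ holds only approximately and the inequality above degrades by a term measuring the leakage of the error direction into the PMU coordinates. I would argue that, under the contraction guaranteed by Theorem \ref{cor_convergence}, the largest error occurs at $k=0$ and its direction dominates the effective worst-case Rayleigh quotient, so $\phi_0$ is representative of $\phi(\bdsb{\mathcal{V}})$; this residual leakage is exactly what the approximate equality $\approx$ in \eqref{omega_OPT} absorbs, in contrast to the strict bound $\phi(\bdsb{\mathcal{V}})\leq\lambda_{\max}(\mathbf{M})$.
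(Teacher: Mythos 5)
Your argument is essentially the paper's own proof: the paper likewise approximates $\phi(\bdsb{\mathcal{V}})$ by the initial Rayleigh quotient $\phi_0$, uses the negligible-PMU-noise assumption to get $\mathbf{v}^0-\mathbf{v}_{\textrm{\tiny est}}\approx(\mathbf{I}_{2N}-\mathbf{J}_{\mathcal{V}})(\mathbf{v}_{\textrm{\tiny prior}}-\mathbf{v}_{\textrm{\tiny est}})$, and invokes the idempotence of $\mathbf{I}_{2N}-\mathbf{J}_{\mathcal{V}}$ to bound the quotient by $\lambda_{\max}[(\mathbf{I}_{2N}-\mathbf{J}_{\mathcal{V}})^T\mathbf{M}(\mathbf{I}_{2N}-\mathbf{J}_{\mathcal{V}})]$. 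Your closing discussion of why the $k=0$ bound is taken as representative of all $\phi_k$ is in fact more explicit than the paper, which simply declares the approximation by $\phi_0$ ``to maintain tractability.''
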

\begin{proof}
	See Appendix \ref{proof_prop_phi}.
\end{proof}
%

%


\vspace{-0.3cm}
\subsection{Performance (Accuracy)}\label{performance_issue}
\vspace{-0.1cm}
Given Remark \ref{observability} and \ref{convergence} for observability and convergence, we proceed to discuss the accuracy of the state estimator. This is evaluated by the error between the iterate $\mathbf{v}^k$ and the true state $\mathbf{v}_{\textrm{\tiny true}}$, which can be bounded by the triangular inequality
\begin{align}
	\left\|\mathbf{v}^k-\mathbf{v}_{\textrm{\tiny true}}\right\| \leq \left\|\mathbf{v}^k-\mathbf{v}_{\textrm{\tiny est}}\right\|+ \left\|\mathbf{v}_{\textrm{\tiny est}} - \mathbf{v}_{\textrm{\tiny true}}\right\|.
\end{align}
If the iterate $\mathbf{v}^k$ converges stably to the correct estimate ${\lim}_{k\rightarrow\infty}~\mathbf{v}^k = \mathbf{v}_{\textrm{\tiny est}}$, the error can be bounded accordingly by
\begin{align}
	{\lim}_{k\rightarrow\infty}~\left\|\mathbf{v}^{k+1} - \mathbf{v}_{\textrm{\tiny true}}\right\| \leq \left\|\mathbf{v}_{\textrm{\tiny est}} - \mathbf{v}_{\textrm{\tiny true}}\right\|.
\end{align}	
If the noise $\mathbf{r}$ in \eqref{meas-model_all} is Gaussian, the estimate $\mathbf{v}_{\textrm{\tiny est}}$ given by \eqref{centralized_SE} is the Maximum Likelihood (ML) estimate. According to classic estimation theory \cite{kayfundamentals}, the mean square error (MSE) of the ML estimates reaches the Cram\'{e}r-Rao~Bound (CRB) asymptotically given sufficient measurements
\begin{align}\label{CRB}
	\mathbb{E}\left[\left\|\mathbf{v}_{\textrm{\tiny est}}-\mathbf{v}_{\textrm{\tiny true}}\right\|^2\right]
	= \mathrm{Tr}\left[\left(\mathbf{F}_{\mathcal{A}}^T(\mathbf{v}_{\textrm{\tiny true}})\mathbf{F}_{\mathcal{A}}(\mathbf{v}_{\textrm{\tiny true}})\right)^{-1}\right]
\end{align}
where the expectation is with respect to the noise distribution $\mathbf{r}$, and the gain matrix $\mathbf{F}_{\mathcal{A}}^T(\mathbf{v}_{\textrm{\tiny true}})\mathbf{F}_{\mathcal{A}}(\mathbf{v}_{\textrm{\tiny true}})$ evaluated at the true state $\mathbf{v}_{\textrm{\tiny true}}$ is the Fisher Information Matrix (FIM).

Many placement designs focus on lowering the CRB in different ways. Specifically, the $A$-, $M$- and {\it accuracy} designs\footnote{There is also a $D$-{\it optimal} in \cite{li2011phasor,kekatos2011convex}, which minimizes the logarithm of the determinant of the FIM, we omit it because it shares less in common with other related works. In simulations, we compare our design only with the {\it accuracy} design because of the common objective in maximizing $\beta(\bdsb{\mathcal{V}})$. Other $A$-, $D$- and $M$-{\it optimal} designs provide similar performances and hence are not repeated in simulations.} in \cite{li2011phasor,kekatos2011convex} focus on maximizing the trace, the minimum diagonal element, and the minimum eigenvalue of the FIM in \eqref{CRB} respectively. Other existing works considering estimation accuracy optimize their designs with respect to the FIM in an ad-hoc manner. For example, \cite{park1988design} minimizes the cost of PMU deployment under a total error constraint on the trace of the FIM, while \cite{zhang2010observability,zhu2009application} are similar to the $M$-{\it optimal} design in picking heuristically the locations by pinpointing the maximum entry in the FIM.
\begin{rmk}\label{performance}
{\bf (performance)} Given a specific PMU placement $\bdsb{\mathcal{V}}$, the MSE of the state estimation is upper bounded as
\begin{align*}
    \mathbb{E}\left[\left\|\mathbf{v}_{\textrm{\tiny est}}-\mathbf{v}_{\textrm{\tiny true}}\right\|^2\right]
	= \mathrm{Tr}\left[\left(\mathbf{F}_{\mathcal{A}}^T(\mathbf{v})\mathbf{F}_{\mathcal{A}}(\mathbf{v})\right)^{-1}\right]
	\leq \frac{2N}{\beta(\bdsb{\mathcal{V}})},
\end{align*}
therefore $\beta(\bdsb{\mathcal{V}})$ is an important metric for PMU placements from the observability and performance perspective.
\end{rmk}
\begin{proof}
	See Appendix \ref{proof_cond_perf}.
\end{proof}
%





\vspace{-0.2cm}

\section{Optimal PMU placement via the COP metric}\label{optimal_PMU_placement}\label{combined_metric}
\vspace{-0.1cm}


Based on Remark \ref{observability}, \ref{convergence} and \ref{performance}, we are ready to introduce our {\it Convergence-Observability-Performance (COP) metric}
\begin{equation}\label{JAC}
	\rho(\bdsb{\mathcal{V}}) = \frac{\beta(\bdsb{\mathcal{V}})}{\phi(\bdsb{\mathcal{V}})},\quad (\mathrm{COP~metric})
\end{equation}
where $\beta(\bdsb{\mathcal{V}})$ is defined in \eqref{beta} and $\phi(\bdsb{\mathcal{V}})$ is the upper bound (used in Theorem \ref{cor_convergence}) of the Rayleigh quotient $\phi_k$ in Lemma \ref{lem_error_recursion}. In fact, it is seen from Remark \ref{observability}, \ref{convergence} and \ref{performance} that the greater the value of $\rho(\bdsb{\mathcal{V}})$: 1) the less sensitive PSSE is to initialization; 2) the faster the algorithm converges asymptotically; 3) the observability and performance metric $\beta(\bdsb{\mathcal{V}})$ scales linearly given $\phi(\bdsb{\mathcal{V}})$.
Therefore, we propose to have the PMUs stabilize the algorithm by giving a good initialization and potentially lowering the estimation error. Next, we exploit the dependency of $\beta(\bdsb{\mathcal{V}})$ and $\phi(\bdsb{\mathcal{V}})$ on  $\bdsb{\mathcal{V}}$ to formulate the placement problem.

We have established the expression of $\beta(\bdsb{\mathcal{V}})$ in \eqref{GN_Hessian}, which however requires an exhaustive search $\mathbf{v}\in\mathbb{V}$. For simplicity, the common practice is to replace the search by substituting the nominal initializer $\mathbf{v}_{\textrm{\tiny prior}}$ in \eqref{x0}, where the flat profile is often chosen $\mathbf{v}_{\textrm{\tiny prior}}=[\mathbf{1}_N^T,\mathbf{0}_N^T]^T$ as in \cite{kekatos2011convex}. This leads to
\begin{align}\label{beta_OPT}
    \beta(\bdsb{\mathcal{V}}) \approx \lambda_{\min}\left[\mathcal{P}(\bdsb{\mathcal{V}})+\mathcal{S}(\mathbf{v}_{\textrm{\tiny prior}}\mathbf{v}_{\textrm{\tiny prior}}^T,\bdsb{\mathcal{I}},\bdsb{\mathcal{F}})\right].
\end{align}


Thus, given a budget on the number of PMUs $N_{\textrm{\tiny PMU}}$ and a total cost constraint $C_{\textrm{\tiny PMU}}$, the {\it optimal} design aims at maximizing the COP metric using the expressions in \eqref{beta_OPT} and \eqref{omega_OPT}
\begin{align}\label{PMU_opt}
	\underset{\bdsb{\mathcal{V}}}{\max}~ &~\frac{\lambda_{\min}\left[\mathcal{P}(\bdsb{\mathcal{V}})+\mathcal{S}(\mathbf{v}_{\textrm{\tiny prior}}\mathbf{v}_{\textrm{\tiny prior}}^T,\bdsb{\mathcal{I}},\bdsb{\mathcal{F}})\right]}{\lambda_{\max}\left[\left(\mathbf{I}_{2N}-\mathbf{J}_{\mathcal{V}}\right)^T\mathbf{M}\left(\mathbf{I}_{2N}-\mathbf{J}_{\mathcal{V}}\right)\right]}\\
                         \mathrm{s.t.}  ~ &~\mathbf{J}_{\mathcal{V}} = \mathbf{I}_2\otimes \mathrm{diag}(\bdsb{\mathcal{V}}),~\mathcal{V}_n\in\{0,1\}\nonumber\\
                         & ~ \mathbf{1}_N^T\bdsb{\mathcal{V}} \leq N_{\textrm{\tiny PMU}},~\bdsb{{{c}}}^T\bdsb{\mathcal{V}} \leq C_{\textrm{\tiny PMU}}, \nonumber
\end{align}
where $\bdsb{{{c}}}=[{{c}}_1,\cdots,{{c}}_N]^T$ contains the cost of each PMU.

Note that maximizing the COP metric alone does not necessarily maximize the observability and performance metric $\beta(\bdsb{\mathcal{V}})$, but instead it is providing a sweet spot between having a good initialization and lowering the estimation error. To ensure that the value of $\beta(\bdsb{\mathcal{V}})$ is sufficiently large, we further consider eliminating critical measurements with a tolerance parameter set by the designer $\beta_{\min}>0$ such that $\beta(\bdsb{\mathcal{V}})$ is guaranteed to surpass an acceptable threshold. Another benefit of eliminating critical measurements is to improve bad data detection capability. Therefore in the next subsection, we formulate the PMU placement problem by considering reliability constraints on data redundancy and {\it critical measurements}.

\vspace{-0.3cm}

\subsection{Elimination of Critical Measurements}

Let us denote by $\bdsb{\mathcal{J}}_n$ and $\bdsb{\mathcal{G}}_{nm}$ the failure patterns for power injection and flow measurements, where the $n$-th bus injection or the line flow on $\{n,m\}$ measured at bus $n$ is removed from the existing SCADA measurements $\bdsb{\mathcal{I}}$ and $\bdsb{\mathcal{F}}$. Then given a tolerance parameter $\beta_{\min}>0$ to ensure the numerical observability\footnote{The value of $\beta_{\min}$ is set to be $0.01$ in simulations for all cases.}, the PMU placement optimization is
\begin{align}\label{PMU_opt_relaxed_critical_meas}
	\underset{\bdsb{\mathcal{V}}}{\max} &~\frac{\lambda_{\min}\left[\mathcal{P}(\bdsb{\mathcal{V}})+\mathcal{S}(\mathbf{v}_{\textrm{\tiny prior}}\mathbf{v}_{\textrm{\tiny prior}}^T,\bdsb{\mathcal{I}},\bdsb{\mathcal{F}})\right]}{\lambda_{\max}\left[\left(\mathbf{I}_{2N}-\mathbf{J}_{\mathcal{V}}\right)^T\mathbf{M}\left(\mathbf{I}_{2N}-\mathbf{J}_{\mathcal{V}}\right)\right]},\\
    \mathrm{s.t.} &~\lambda_{\min}\left[\mathcal{P}(\bdsb{\mathcal{V}})+\mathcal{S}(\mathbf{v}_{\textrm{\tiny prior}}\mathbf{v}_{\textrm{\tiny prior}}^T,\bdsb{\mathcal{J}}_n,\bdsb{\mathcal{F}})\right] \geq \beta_{\min},~\forall n\in\mathcal{N}\nonumber\\
			    &~\lambda_{\min}\left[\mathcal{P}(\bdsb{\mathcal{V}})+\mathcal{S}(\mathbf{v}_{\textrm{\tiny prior}}\mathbf{v}_{\textrm{\tiny prior}}^T,\bdsb{\mathcal{I}},\bdsb{\mathcal{G}}_{nm})\right] \geq \beta_{\min},~\forall \{n,m\}\in\mathcal{E}\nonumber\\
			   &~ \mathbf{J}_{\mathcal{V}} = \mathbf{I}_2\otimes \mathrm{diag}(\bdsb{\mathcal{V}}),~\mathcal{V}_n\in\{0,1\}\nonumber\\
			    &~ \mathbf{1}_N^T\bdsb{\mathcal{V}} \leq N_{\textrm{\tiny PMU}},~\bdsb{{{c}}}^T\bdsb{\mathcal{V}} \leq C_{\textrm{\tiny PMU}}.\nonumber
\end{align}

\begin{rmk}
The constraints above can be easily extended to cover multiple failures by incorporating corresponding outage scenarios $\bdsb{\mathcal{J}}_n$ and $\bdsb{\mathcal{G}}_{nm}$, which will be necessary in eliminating critical measurement set (i.e., minimally dependent set). Furthermore, topological observability constraints can also be easily added because of their linearity with respect to the placement vector $\bdsb{\mathcal{V}}$ as in \cite{bei2005optimal,baldwin1993power,emami2010robust, chakrabarti2008optimal, chakrabarti2009placement}. We omit the full formulation due to lack of space.
\end{rmk}

\vspace{-0.3cm}
\subsection{Semi-Definite Programming (SDP) and Relaxation}
The eigenvalue problem in \eqref{PMU_opt_relaxed_critical_meas} can be reformulated via linear matrix inequalities using two dummy variables $\phi$ and $\beta$
\begin{align}\label{PMU_opt_relaxed}
	\underset{\bdsb{\mathcal{V}},\beta,\phi}{\max} ~~ \frac{\beta}{\phi}, ~~&\mathrm{s.t.} ~
    	~\mathcal{P}(\bdsb{\mathcal{V}}) +\mathcal{S}(\mathbf{v}_{\textrm{\tiny prior}}\mathbf{v}_{\textrm{\tiny prior}}^T,\bdsb{\mathcal{I}},\bdsb{\mathcal{F}}) \succeq \beta \mathbf{I}\\
	&
	\begin{bmatrix}
		\phi\mathbf{I}_{2N} & \mathbf{M}^{\frac{1}{2}}(\mathbf{I}_{2N}-\mathbf{J}_{\mathcal{V}})\nonumber\\
		(\mathbf{I}_{2N}-\mathbf{J}_{\mathcal{V}})^T\mathbf{M}^{\frac{T}{2}} & \mathbf{I}_{2N}
	\end{bmatrix}
	\succeq \mathbf{0}\nonumber\\	
	&~\mathcal{P}(\bdsb{\mathcal{V}}) +\mathcal{S}(\mathbf{v}_{\textrm{\tiny prior}}\mathbf{v}_{\textrm{\tiny prior}}^T,\bdsb{\mathcal{J}}_n,\bdsb{\mathcal{F}}) \succeq \beta_{\min}\mathbf{I}_{2N},~\forall n\in\mathcal{N}\nonumber\\
	&~\mathcal{P}(\bdsb{\mathcal{V}}) +\mathcal{S}(\mathbf{v}_{\textrm{\tiny prior}}\mathbf{v}_{\textrm{\tiny prior}}^T,\bdsb{\mathcal{I}},\bdsb{\mathcal{G}}_{nm}) \succeq \beta_{\min}\mathbf{I}_{2N},~\forall \{n,m\}\in\mathcal{E}\nonumber\\	
	&~
	\mathbf{J}_{\mathcal{V}} = \mathbf{I}_2\otimes \mathrm{diag}(\bdsb{\mathcal{V}}),~\mathcal{V}_n\in\{0,1\}\nonumber\\
      &~
    \mathbf{1}_N^T\bdsb{\mathcal{V}} \leq N_{\textrm{\tiny PMU}},~\bdsb{{{c}}}^T\bdsb{\mathcal{V}} \leq C_{\textrm{\tiny PMU}}.\nonumber
\end{align}



%
To avoid solving this complicated eigenvalue problem with integer constraints, we relax \eqref{PMU_opt_relaxed} by converting the integer constraint $\mathcal{V}_n\in\{0,1\}$ to a convex constraint $0\leq \mathcal{V}_n\leq 1$. Then, the optimization becomes a quasi-convex problem that needs to be solved in an iterative fashion via the classical bisection method by performing a sequence of semi-definite programs (SDP) feasibility problems \cite{boyd2004convex}. Clearly this consumes considerable computations and less desirable. Fortunately, since the objective \eqref{PMU_opt_relaxed} is a linear fractional function, the {\it Charnes-Cooper} transformation \cite{charnes1962programming} can be used to re-formulate the problem in \eqref{PMU_opt_relaxed} as a convex SDP, whose global optimum can be obtained in one pass.

\begin{proposition}
By letting $\gamma=1/\phi$, $\tau = \beta/\phi$ and $\bdsb{\xi} = \bdsb{\mathcal{V}}/\phi$, the global optimum solution to \eqref{PMU_opt_relaxed} can be determined by
\begin{align}\label{SDP_radius}
	\underset{\bdsb{\xi},\tau,\gamma}{\max} ~~ \tau,
	~~~~
	\mathrm{s.t.} &~\mathcal{P}(\bdsb{\xi}) +
    \gamma\mathcal{S}(\mathbf{v}_{\textrm{\tiny prior}}\mathbf{v}_{\textrm{\tiny prior}}^T,\bdsb{\mathcal{I}},\bdsb{\mathcal{F}}) \succeq \tau \mathbf{I}\\
	&~
	\begin{bmatrix}
		\mathbf{I} & \mathbf{M}^{\frac{1}{2}}(\gamma\mathbf{I}-\mathbf{I}_{\bdsb{\xi}})\nonumber\\
		(\gamma\mathbf{I}-\mathbf{I}_{\bdsb{\xi}})^T\mathbf{M}^{\frac{T}{2}} & \gamma\mathbf{I}
	\end{bmatrix}
	\succeq \mathbf{0}\nonumber\\
	&~\mathcal{P}(\bdsb{\xi}) + \gamma\mathcal{S}(\mathbf{v}_{\textrm{\tiny prior}}\mathbf{v}_{\textrm{\tiny prior}}^T,\bdsb{\mathcal{J}}_n,\bdsb{\mathcal{F}}) \succ \mathbf{0},~\forall n\nonumber\\
       &~\mathcal{P}(\bdsb{\xi}) + \gamma\mathcal{S}(\mathbf{v}_{\textrm{\tiny prior}}\mathbf{v}_{\textrm{\tiny prior}}^T,\bdsb{\mathcal{I}},\bdsb{\mathcal{G}}_{nm})\succ \mathbf{0},~\forall \{n,m\}\nonumber\\
	&~\mathbf{I}_{\bdsb{\xi}} = \mathbf{I}_2\otimes\mathrm{diag}\left(\bdsb{\xi}\right), \xi_n\in[0,\gamma]\nonumber\\
       &~\mathbf{1}_N^T\bdsb{\xi} \leq N_{\textrm{\tiny PMU}}\gamma, ~\bdsb{{{c}}}^T\bdsb{\xi} \leq \gamma C_{\textrm{\tiny PMU}},\nonumber
\end{align}
whose solution is mapped to the solution of \eqref{PMU_opt_relaxed} as 
\begin{align}\label{optimal_SDP}
	\bdsb{\mathcal{V}}^\star = \bdsb{\xi}^\star/\gamma^\star.
\end{align}
\end{proposition}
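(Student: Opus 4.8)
The plan is to verify that the substitution $\gamma = 1/\phi$, $\tau = \beta/\phi$, $\bdsb{\xi} = \bdsb{\mathcal{V}}/\phi$ is a well-defined bijection between the feasible set of the relaxed problem \eqref{PMU_opt_relaxed} (with the integer constraint loosened to $\mathcal{V}_n\in[0,1]$) and that of the SDP \eqref{SDP_radius}, under which the two objectives coincide. First I would argue that $\phi>0$ at every feasible point: the block LMI forces $\phi$ to upper-bound the Rayleigh quotient of the positive semidefinite matrix $\mathbf{M}$, so $\phi\ge\phi(\bdsb{\mathcal{V}})>0$ whenever $\mathbf{J}_{\mathcal{V}}\neq\mathbf{I}_{2N}$, and in any case the denominator of the COP metric must be positive for the ratio to be meaningful. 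Hence $\gamma=1/\phi$ and $\bdsb{\xi}=\bdsb{\mathcal{V}}/\phi$ are well defined with $\gamma>0$, and the substitution is invertible with inverse $\phi=1/\gamma$, $\beta=\tau/\gamma$, $\bdsb{\mathcal{V}}=\bdsb{\xi}/\gamma$; this strict positivity is exactly what legitimizes the recovery map \eqref{optimal_SDP}. Because the objective $\beta/\phi=\tau$ is linear-fractional, maximizing it is literally maximizing $\tau$.

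Next I would push the substitution through each constraint, exploiting two structural facts: $\mathcal{P}(\cdot)$ is positively homogeneous of degree one in its argument, so $\mathcal{P}(\bdsb{\mathcal{V}})=\mathcal{P}(\phi\bdsb{\xi})=\phi\,\mathcal{P}(\bdsb{\xi})$, and $\mathcal{S}(\mathbf{v}_{\textrm{\tiny prior}}\mathbf{v}_{\textrm{\tiny prior}}^T,\cdot,\cdot)$ is a fixed matrix independent of $\bdsb{\mathcal{V}}$. Multiplying the first constraint $\mathcal{P}(\bdsb{\mathcal{V}})+\mathcal{S}(\mathbf{v}_{\textrm{\tiny prior}}\mathbf{v}_{\textrm{\tiny prior}}^T,\bdsb{\mathcal{I}},\bdsb{\mathcal{F}})\succeq\beta\mathbf{I}$ by $\gamma>0$ and using $\gamma\mathcal{P}(\phi\bdsb{\xi})=\mathcal{P}(\bdsb{\xi})$ and $\gamma\beta=\tau$ reproduces exactly $\mathcal{P}(\bdsb{\xi})+\gamma\mathcal{S}(\mathbf{v}_{\textrm{\tiny prior}}\mathbf{v}_{\textrm{\tiny prior}}^T,\bdsb{\mathcal{I}},\bdsb{\mathcal{F}})\succeq\tau\mathbf{I}$. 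The budget and box constraints scale identically: since $\mathbf{I}_{\bdsb{\xi}}=\mathbf{J}_{\mathcal{V}}/\phi$, the bound $\mathcal{V}_n\in[0,1]$ becomes $\xi_n\in[0,\gamma]$, and the cardinality and cost caps become $\mathbf{1}_N^T\bdsb{\xi}\le N_{\textrm{\tiny PMU}}\gamma$ and $\bdsb{c}^T\bdsb{\xi}\le\gamma C_{\textrm{\tiny PMU}}$.

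For the block LMI I would use a congruence transformation rather than a scalar multiplication: conjugating the $4N\times4N$ matrix by $\mathrm{diag}(\gamma^{1/2}\mathbf{I}_{2N},\gamma^{1/2}\mathbf{I}_{2N})$ preserves positive semidefiniteness because $\gamma>0$ makes it invertible. This sends the top-left block $\phi\mathbf{I}$ to $\gamma\phi\mathbf{I}=\mathbf{I}$, the bottom-right block $\mathbf{I}$ to $\gamma\mathbf{I}$, and the off-diagonal $\mathbf{M}^{\frac{1}{2}}(\mathbf{I}_{2N}-\mathbf{J}_{\mathcal{V}})$ to $\gamma\mathbf{M}^{\frac{1}{2}}(\mathbf{I}_{2N}-\mathbf{J}_{\mathcal{V}})=\mathbf{M}^{\frac{1}{2}}(\gamma\mathbf{I}-\gamma\mathbf{J}_{\mathcal{V}})=\mathbf{M}^{\frac{1}{2}}(\gamma\mathbf{I}-\mathbf{I}_{\bdsb{\xi}})$, where I used $\gamma\mathbf{J}_{\mathcal{V}}=\gamma\phi\mathbf{I}_{\bdsb{\xi}}=\mathbf{I}_{\bdsb{\xi}}$. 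This yields the second LMI of \eqref{SDP_radius}; crucially, the resulting fixed top-left block $\mathbf{I}$ is the implicit Charnes-Cooper normalization that pins down the homogenization scale, so the map introduces no spurious rays and is genuinely onto.

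Finally, the critical-measurement constraints are where I expect the only real delicacy. Scaling $\mathcal{P}(\bdsb{\mathcal{V}})+\mathcal{S}(\mathbf{v}_{\textrm{\tiny prior}}\mathbf{v}_{\textrm{\tiny prior}}^T,\bdsb{\mathcal{J}}_n,\bdsb{\mathcal{F}})\succeq\beta_{\min}\mathbf{I}$ by $\gamma$ gives $\mathcal{P}(\bdsb{\xi})+\gamma\mathcal{S}(\mathbf{v}_{\textrm{\tiny prior}}\mathbf{v}_{\textrm{\tiny prior}}^T,\bdsb{\mathcal{J}}_n,\bdsb{\mathcal{F}})\succeq\gamma\beta_{\min}\mathbf{I}$ (and likewise for $\bdsb{\mathcal{G}}_{nm}$). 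Because $\gamma>0$ and $\beta_{\min}>0$, the right-hand side is a strictly positive multiple of the identity, so this is the positive-definiteness requirement $\succ\mathbf{0}$ stated in \eqref{SDP_radius} carrying an effective margin $\gamma\beta_{\min}$; I would note that the exact, as opposed to essential (numerical-observability), equivalence keeps the form $\succeq\gamma\beta_{\min}\mathbf{I}$. The hard part of the write-up is therefore not any single calculation but the bookkeeping that establishes the equivalence both ways: namely that (a) $\gamma>0$ holds at all feasible points so every multiplication by $\gamma$ is reversible, and (b) the congruence-plus-normalization argument certifies surjectivity, so that the optimal $\tau^\star$ of \eqref{SDP_radius} equals the optimal value of the COP metric and $\bdsb{\mathcal{V}}^\star=\bdsb{\xi}^\star/\gamma^\star$ is feasible and optimal for \eqref{PMU_opt_relaxed}.
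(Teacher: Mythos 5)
Your proposal is correct and follows exactly the route the paper intends: the paper gives no written proof of this proposition, relying entirely on the citation to the Charnes--Cooper transformation, and your write-up is precisely the verification that citation implies (linearity of $\mathcal{P}(\cdot)$ in the placement vector, scaling of the scalar constraints by $\gamma>0$, and the congruence or, equivalently, plain scalar multiplication of the block LMI, which indeed lands on the stated form since $\gamma\mathbf{J}_{\mathcal{V}}=\mathbf{I}_{\bdsb{\xi}}$). Your one substantive observation --- that the exact image of the constraint $\mathcal{P}(\bdsb{\mathcal{V}})+\mathcal{S}(\cdot,\bdsb{\mathcal{J}}_n,\bdsb{\mathcal{F}})\succeq\beta_{\min}\mathbf{I}$ under the transformation is $\succeq\gamma\beta_{\min}\mathbf{I}$ rather than the weaker $\succ\mathbf{0}$ written in \eqref{SDP_radius} --- is accurate and worth retaining, as it is a genuine (if minor) looseness in the paper's stated formulation that the paper itself does not acknowledge.
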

The solution $\bdsb{\mathcal{V}}^\star$ has real values but not the original binary values. Here we use a randomization technique \cite{luo2010semidefinite} to choose the solution by drawing a group of $\ell_\star$ binary vectors $\bdsb{\mathcal{V}}_\ell$ from a Bernoulli distribution on each entry with probabilities obtained from the solution $\bdsb{\mathcal{V}}^\star$. Then we compare the COP metric evaluated at the group of candidates $\{\bdsb{\mathcal{V}}_\ell\}_{\ell=1}^{\ell_\star}$ and choose the one that has the maximum as the optimal placement vector. This scheme approximates closely to the optimal solution of the original integer problem as shown in simulations.


\begin{figure}
\centering
\includegraphics[width=\linewidth,height=6cm]{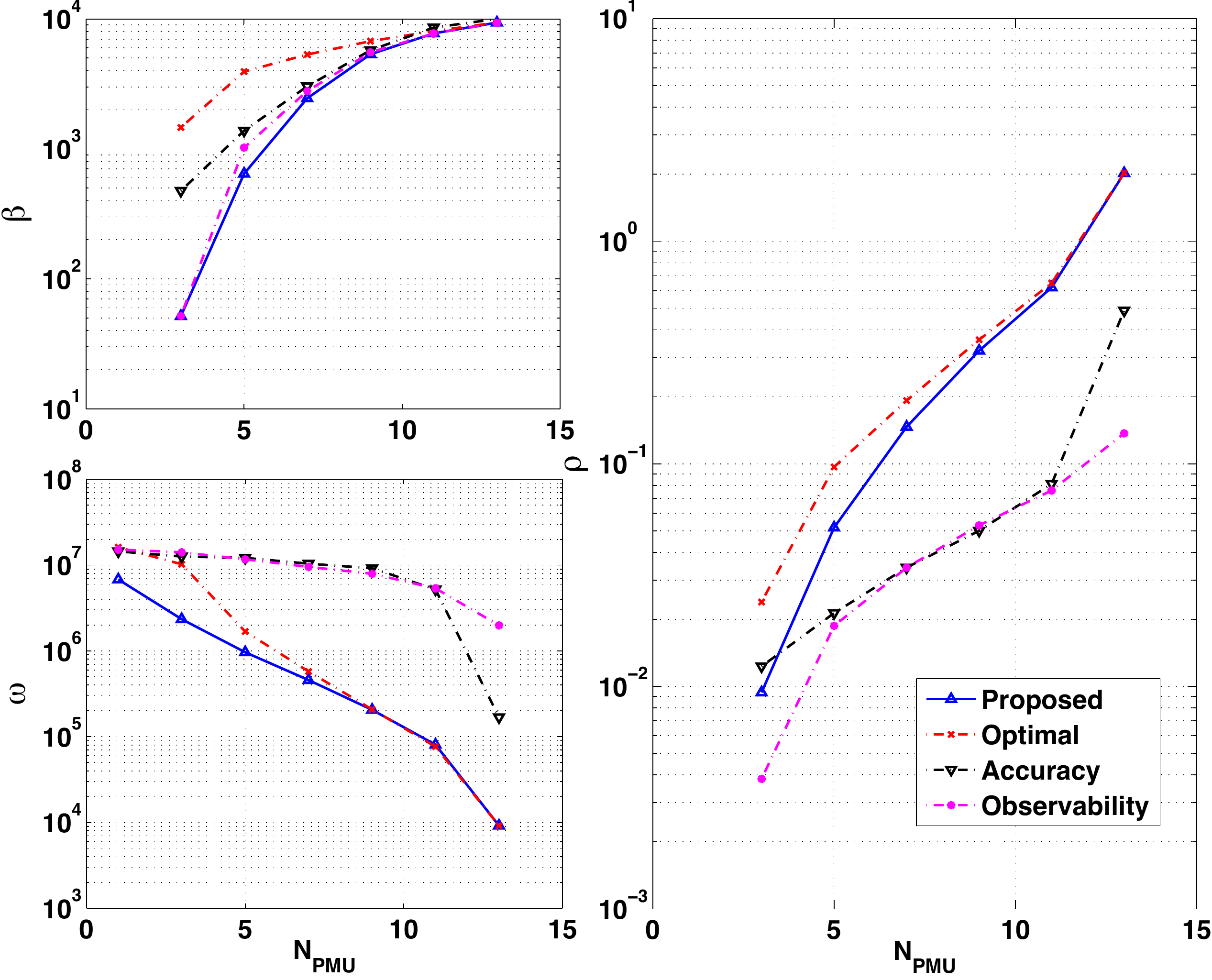}
\caption{Comparison of $\rho$,  $\phi$ and $\beta$ for the IEEE-14 system.}\label{fig.optimality}
\vspace{-0.4cm}
\end{figure}

\vspace{-0.2cm}
\section{Simulations}\label{simulations}
\vspace{-0.1cm}
In this section, we compare our {\it proposed} design in different systems mainly against the {\it accuracy} placement that optimizes estimation accuracy (i.e., $E$-{\it optimal} in \cite{li2011phasor,kekatos2011convex}) and an {\it observability} placement that satisfies system observability \cite{baldwin1993power} jointly with SCADA measurements. The measurements are generated with independent errors $\mathbf{R} = \sigma^2 \mathbf{I}$ and $\sigma^2=10^{-4}$. We demonstrate the optimality of our formulation in the IEEE-14 system, and extend the comparison on the convergence and estimation performance for IEEE 30 and 118 systems, using $15\%$ of all SCADA measurements provided at random\footnote{The number of SCADA measurements in each experiment is $15$\%$\times(4N+8L)$, where $N$ is the number of buses and $L$ is the number of lines.}. 


\vspace{-0.3cm}
\subsection{IEEE 14 bus System {(Fig. \ref{fig.optimality} and Fig. \ref{fig.sequence})}}
We show the optimality of the {\it proposed} placement in Fig. \ref{fig.optimality} by comparing $\rho$, $\beta$ and $\phi$ against the {\it accuracy}, the {\it observability} and most importantly the exact {\it optimal} PMU placement in the IEEE-14 system for $N_{\textrm{\tiny PMU}}=1,\cdots,13$, where the exact {\it optimal} solution is obtained by an exhaustive search in the non-relaxed problem \eqref{PMU_opt_relaxed_critical_meas}. It is seen from $\beta$ that under $15$\% SCADA measurements, the system remains unobservable until $N_{\textrm{\tiny PMU}}=3$ since $\beta=0$ is not shown on the curve.

\begin{figure}
\centering
\includegraphics[width=0.8\linewidth]{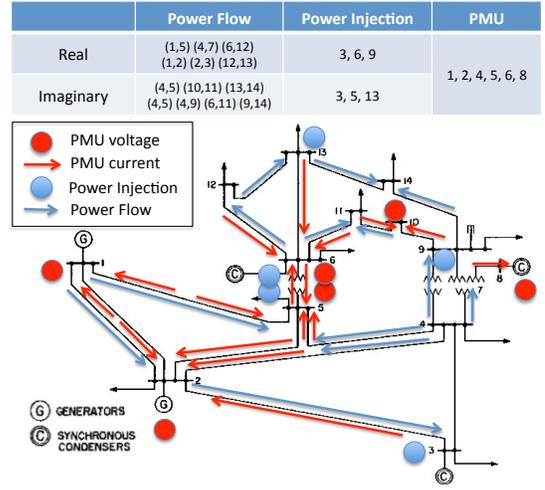}
\caption{Example of the proposed placement for the IEEE-14 system.}\label{fig.sequence}
\vspace{-0.4cm}
\end{figure}

A significant gap can be seen in Fig. \ref{fig.optimality} between the {\it proposed}, the {\it optimal} and the {\it accuracy} schemes. It is clear that the {\it proposed} scheme gives a uniformly greater $\rho$ than the {\it accuracy} scheme, and closely touches the {\it optimal} solution. Clearly, the {\it accuracy} design achieves a larger $\beta$ than the {\it proposed} scheme, but this quantity is less sensitive to the PMU placement than $\phi$ for all $N_{\textrm{\tiny PMU}}$. This implies that the estimation accuracy of the hybrid state estimation is not very sensitive to the placement, because of the presence of SCADA measurements. In fact, convergence is a more critical issue. In particular, when the PMU budget is low (i.e. $N_{\textrm{\tiny PMU}}$ is small), the {\it accuracy} does not provide discernible improvement on $\phi$ (thus $\rho$) while the {\it optimal} and {\it proposed} schemes considerably lower $\phi$ and increase $\rho$, which stabilizes and accelerates the algorithm convergence without affecting greatly accuracy.

In Fig. \ref{fig.sequence}, we show an example of the {\it proposed} placement with $N_{\textrm{\tiny PMU}}=6$ in one experiment where there are $19$ SCADA measurements ($15\%$ of total) marked in ``blue'' while there are PMU measurements marked in ``red''. It can be seen that the system is always observable even with single failure because each node is metered by the measurements at least twice so there is enough redundancy to avoid critical measurements.

\begin{figure}[!t]
\begin{center}
{\subfigure[][IEEE-30 bus system]{\resizebox{0.45\textwidth}{!}{\includegraphics{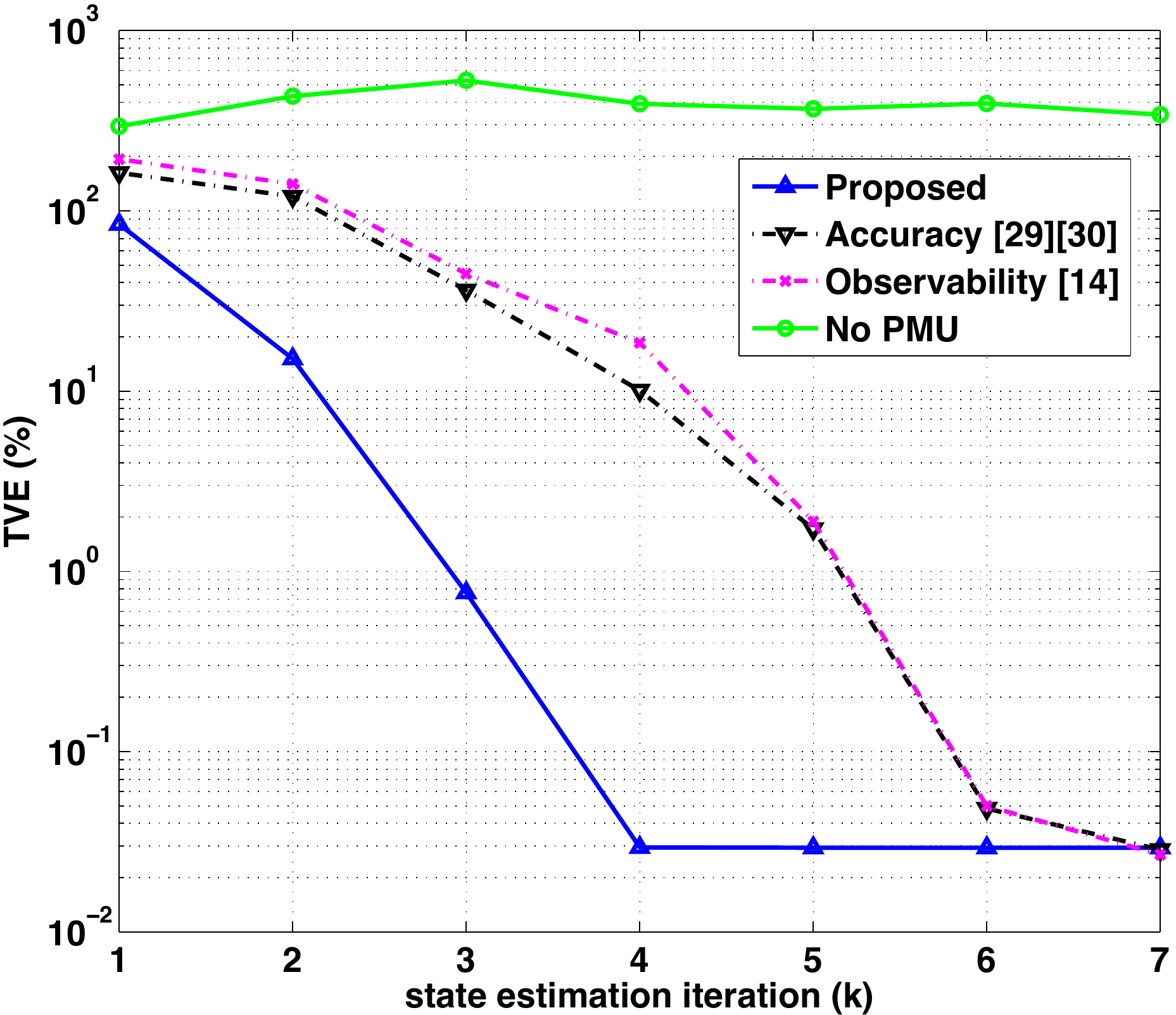}\label{fig.estimation_accuracy_30}}}}\\
{\subfigure[][IEEE-118 bus system]{\resizebox{0.45\textwidth}{!}{\includegraphics{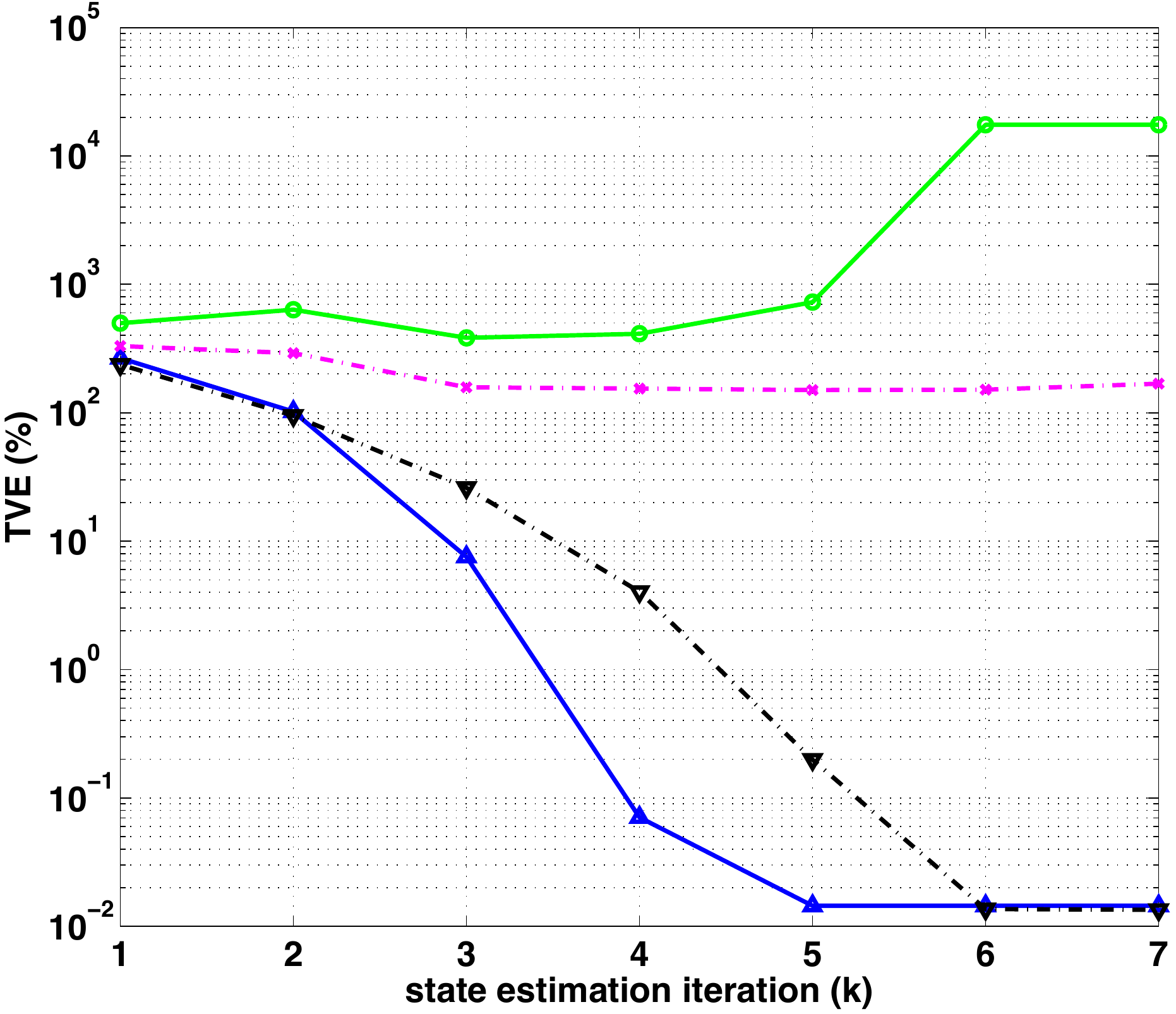}\label{fig.estimation_accuracy_118}}}}
\end{center}\vspace{-0.4cm}
\caption{TVE curves for the IEEE-30 and 118 system.}
\vspace{-0.5cm}
\end{figure}

\vspace{-0.3cm}
\subsection{IEEE 30-bus and 118-bus Systems}

We illustrate the estimation convergence and performance of our {\it proposed} placement against the alternatives above and the case with no PMUs, in terms of the total vector error (TVE) in \cite{martin2008exploring} for evaluating the accuracy of PMU-related state estimates
\begin{align}
	\mathrm{TVE}_k =  \left\|\mathbf{v}^k-\mathbf{v}_{\textrm{\tiny true}}\right\|/\left\|\mathbf{v}_{\textrm{\tiny true}}\right\| \times 100\%
\end{align}	
for each iteration $k$. This shows the decrease of TVE as the Gauss-Newton proceeds iteratively, which is a typical way to illustrate convergence behavior and the asymptotic accuracy upon convergence. With $17\%$ PMU deployment, we compare the TVE curves for the IEEE-30 system with $N_{\textrm{\tiny PMU}}=5$ in Fig. \ref{fig.estimation_accuracy_30} and the IEEE-118 system with $N_{\textrm{\tiny PMU}}=20$ in Fig. \ref{fig.estimation_accuracy_118}. To verify the robustness to initialization (numerical stability) and the convergence rate, the TVE curves are averaged over $200$ experiments. For each experiment, we generate a placement guaranteeing observability for the {\it observability} placement according to \cite{baldwin1993power}, and use a non-informative initializer $\mathbf{v}_{\textrm{\tiny prior}} = [\mathbf{1}_N^T+0.1\bdsb{\varepsilon}^T,\mathbf{0}_N^T]^T$ perturbed by a zero mean Gaussian error vector $\bdsb{\varepsilon}$ with $\mathbb{E}\left[\bdsb{\varepsilon}\bdsb{\varepsilon}^T\right]=\mathbf{I}$. We leave the imaginary part unperturbed because phases are usually small.

It is seen in Fig. \ref{fig.estimation_accuracy_30} that if there are no PMU installed, it is possible that the algorithm does not converge while the {\it proposed} placement scheme converges stably. The performance of the {\it observability} placement is not stably guaranteed even if it satisfies observability because it diverges under perturbations for the 118 system in Fig. \ref{fig.estimation_accuracy_30}. A similar divergent trend can be observed if the initialization is very inaccurate, regardless of how it is set. Consistent with Theorem \ref{cor_convergence}, since the noise $\sigma^2$ is small, the algorithm converges quadratically for the {\it proposed} and the {\it accuracy} placement, but the convergence rates vary greatly. Although the asymptotic TVE remains comparable, the {\it proposed} placement considerably accelerates the convergence compared to the {\it observability} and {\it accuracy} placement. 

\vspace{-0.2cm}
\section{Conclusions}
\vspace{-0.1cm}
In this paper, we propose a useful metric, referred to as COP, to evaluate the convergence and accuracy of hybrid PSSE for a given sensor deployment, where PMUs are used to initialize the Gauss-Newton iterative estimation. The COP metric is derived from the convergence analysis of the Gauss-Newton state estimation procedures, which is a joint measure for convergence $\phi(\bdsb{\mathcal{V}})$ and the FIM as a measure for accuracy and observability $\beta(\bdsb{\mathcal{V}})$. We optimize our placement strategy by maximizing the COP metric $\beta(\bdsb{\mathcal{V}})/\phi(\bdsb{\mathcal{V}})$ via a simple SDP, and the critical measurement constraints in the SDP formulation further ensure that the numerical observability $\beta(\bdsb{\mathcal{V}})$ is bounded away from zero up to a tolerable point. Finally, the simulations confirm numerically the convergence and estimation performance of the proposed scheme.

\vspace{-0.2cm}
\appendices
\section{Power Flow Equations and Jacobian Matrix}\label{power_flow_equations_appendix}
The admittance matrix $\mathbf{Y}=[-Y_{nm}]_{N\times N}$, includes line admittances $Y_{nm}=G_{nm}+\mathrm{i}B_{nm}$, $\{n,m\}\in\mathcal{E}$ and bus admittance-to-ground $\bar{Y}_{nm}=\bar{G}_{nm}+\mathrm{i}\bar{B}_{nm}$ in the $\Pi$-model of line $\{n,m\}\in\mathcal{E}$, and self-admittance $Y_{nn}= - \sum_{m\neq n} (\bar{Y}_{nm} + Y_{nm})$. Using the canonical basis $\mathbf{e}_n=[0,\cdots,1,\cdots,0]^T$ and the matrix $\mathbf{Y}$, we define the following matrices
\begin{align*}
	\mathbf{Y}_n &\triangleq \mathbf{e}_n\mathbf{e}_n^T\mathbf{Y},
	\quad
	\mathbf{Y}_{nm} &\triangleq (Y_{nm}+\bar{Y}_{nm})\mathbf{e}_n\mathbf{e}_n^T - Y_{nm}\mathbf{e}_n\mathbf{e}_m^T.
\end{align*}
Letting $\mathbf{G}_n = \Re\{\mathbf{Y}_n\}$, $\mathbf{B}_n=\Im\{\mathbf{Y}_n\}$, $\mathbf{G}_{nm}=\Re\{\mathbf{Y}_{nm}\}$ and $\mathbf{B}_{nm}=\Im\{\mathbf{Y}_{nm}\}$, we define the following matrices
\begin{align*}
    \mathbf{N}_{P,n}
    &\triangleq
    \begin{bmatrix}
        \mathbf{G}_n & - \mathbf{B}_n \\
        \mathbf{B}_n & \mathbf{G}_n
    \end{bmatrix}~~~~~~
    \mathbf{N}_{Q,n}
    \triangleq
    -
    \begin{bmatrix}
        \mathbf{B}_n & \mathbf{G}_n\\
        -\mathbf{G}_n & \mathbf{B}_n
    \end{bmatrix}\\
    \mathbf{E}_{P,nm}
    &\triangleq
    \begin{bmatrix}
        \mathbf{G}_{nm} &  - \mathbf{B}_{nm} \\
        \mathbf{B}_{nm}  & \mathbf{G}_{nm}
    \end{bmatrix}~
    \mathbf{E}_{Q,nm}
    \triangleq
    -
    \begin{bmatrix}
        \mathbf{B}_{nm}  & \mathbf{G}_{nm}\\
        -\mathbf{G}_{nm} & \mathbf{B}_{nm}
    \end{bmatrix}\\
    \mathbf{C}_{I,nm}
    &\triangleq
    \begin{bmatrix}
        \mathbf{G}_{nm}  & \mathbf{0}\\
        \mathbf{0}  & -\mathbf{B}_{nm}
    \end{bmatrix}~~
    \mathbf{C}_{J,nm}
    \triangleq
    ~
    \begin{bmatrix}
        \mathbf{B}_{nm}  & \mathbf{0}\\
        \mathbf{0}  & \mathbf{G}_{nm}
    \end{bmatrix}.
\end{align*}
The SCADA system collects active/reactive injection $(P_n,Q_n)$ at bus $n$ and flow $(P_{nm},Q_{nm})$ at bus $n$ on line $\{n,m\}$
		\begin{align}
			P_n
			&=  \mathbf{v}^T\mathbf{N}_{P,n}\mathbf{v},\quad\quad~
			Q_n
			=  \mathbf{v}^T\mathbf{N}_{Q,n}\mathbf{v},\label{power_flow_eq_SCADA_power_inj}\\
			P_{nm}
			&= \mathbf{v}^T\mathbf{E}_{P,nm}\mathbf{v},\quad
			Q_{nm}
			= \mathbf{v}^T\mathbf{E}_{Q,nm}\mathbf{v},\label{power_flow_eq_SCADA_power_flow}
		\end{align}
and stack them in the power flow equations
		\begin{align}
			\mathbf{f}_{\mathcal{I}}(\mathbf{v}) &= [\cdots,P_n, \cdots, \cdots, Q_n, \cdots]^T\\
			\mathbf{f}_{\mathcal{F}}(\mathbf{v}) &= [\cdots,P_{nm},\cdots,\cdots,Q_{nm}, \cdots]^T.
		\end{align}
The WAMS collects the voltage $(\Re\{V_n\},\Im\{V_n\})$ at bus $n$ and the current $(I_{nm},J_{nm})$ on line $\{n,m\}$ measured at bus $n$
		\begin{align}\label{power_flow_eq_PMU}
			I_{nm}
			&= \left(\mathbf{1}_2\otimes\mathbf{e}_n\right)^T\mathbf{C}_{I,nm}\mathbf{v}\\
			J_{nm}
			&= \left(\mathbf{1}_2\otimes\mathbf{e}_n\right)^T\mathbf{C}_{J,nm}\mathbf{v},
		\end{align}
where $\otimes$ is the Kronecker product, and stacks them as
\begin{align}
	\mathbf{f}_{\mathcal{V}}(\mathbf{v}) = \mathbf{v},~\mathbf{f}_{\mathcal{C}}(\mathbf{v}) = [\cdots,I_{nm},\cdots,\cdots,J_{nm}, \cdots]^T.
\end{align}
The Jacobian $\mathbf{F}(\mathbf{v})$ can be derived from \eqref{power_flow_eq_PMU}, \eqref{power_flow_eq_SCADA_power_inj} and \eqref{power_flow_eq_SCADA_power_flow}
\begin{align}\label{Jacobian}
    \mathbf{F}^T(\mathbf{v})
    &=
        \begin{bmatrix}
    		\mathbf{I}_{2N},~
		\mathbf{H}^T_{\mathcal{C}},~
		\mathbf{H}^T_{\mathcal{I}}(\mathbf{I}_{2N}\otimes\mathbf{v}),~\mathbf{H}_{\mathcal{F}}^T
		(\mathbf{I}_{4L}\otimes\mathbf{v})    \end{bmatrix}
\end{align}
where
\begin{align*}
	\mathbf{H}_{\mathcal{C}} &\triangleq [\cdots,\mathbf{H}_{I,n}^T,\cdots,\cdots,\mathbf{H}_{J,n}^T,\cdots]^T\\
	\mathbf{H}_{\mathcal{I}} &\triangleq [ \cdots,\mathbf{N}_{P,n}+\mathbf{N}_{P,n}^T,\cdots,\mathbf{N}_{Q,n}+\mathbf{N}_{Q,n}^T,\cdots]^T\\
	\mathbf{H}_{\mathcal{F}} &\triangleq[\cdots,\mathbf{E}_{P,nm}+\mathbf{E}_{P,nm}^T,\cdots,\mathbf{E}_{Q,nm}+\mathbf{E}_{Q,nm}^T,\cdots]^T
\end{align*}
using $\mathbf{S}_n \triangleq \mathbf{I}_{L_n}\otimes\left(\mathbf{1}_2\otimes\mathbf{e}_n\right)^T$ and
\begin{align}
	\mathbf{H}_{I,n} &\triangleq \mathbf{S}_n \mathbf{C}_{I,n},\quad \mathbf{C}_{I,n} \triangleq [\cdots,\mathbf{C}_{I,nm}^T,\cdots]^T\\
	\mathbf{H}_{J,n} &\triangleq \mathbf{S}_n \mathbf{C}_{J,n},\quad \mathbf{C}_{J,n} \triangleq [\cdots,\mathbf{C}_{J,nm}^T,\cdots]^T.\nonumber
\end{align}

\begin{figure*}[t]
\begin{align}\label{full_exp}
   \left\|\mathbf{F}_{\mathcal{A}}(\mathbf{v})-\mathbf{F}_{\mathcal{A}}(\mathbf{v}')\right\|_F^2
   &=\mathrm{Tr}\left[\sum_n \frac{\mathcal{I}_n}{\sigma_{\mathcal{I}}^2}(\mathbf{N}_{P,n}+\mathbf{N}_{P,n}^T)^T(\mathbf{v}-\mathbf{v}')(\mathbf{v}-\mathbf{v}')^T(\mathbf{N}_{P,n}+\mathbf{N}_{P,n}^T)\right]\nonumber\\
   &+\mathrm{Tr}\left[\sum_n \frac{\mathcal{I}_n}{\sigma_{\mathcal{I}}^2}(\mathbf{N}_{Q,n}+\mathbf{N}_{Q,n}^T)^T(\mathbf{v}-\mathbf{v}')(\mathbf{v}-\mathbf{v}')^T(\mathbf{N}_{Q,n}+\mathbf{N}_{Q,n}^T)\right]\nonumber\\
    &+\mathrm{Tr}\left[\sum_{n,m} \frac{\mathcal{F}_{nm}}{\sigma_{\mathcal{F}}^2}(\mathbf{E}_{P,nm}+\mathbf{E}_{P,nm}^T)^T(\mathbf{v}-\mathbf{v}')(\mathbf{v}-\mathbf{v}')^T(\mathbf{E}_{P,nm}+\mathbf{E}_{P,nm}^T)\right]\nonumber\\
    &+\mathrm{Tr}\left[\sum_{n,m} \frac{\mathcal{F}_{nm}}{\sigma_{\mathcal{F}}^2}(\mathbf{E}_{Q,nm}+\mathbf{E}_{Q,nm}^T)^T(\mathbf{v}-\mathbf{v}')(\mathbf{v}-\mathbf{v}')^T(\mathbf{E}_{Q,nm}+\mathbf{E}_{Q,nm}^T)\right]
\end{align}
\end{figure*}

\section{Proof of Lemma \ref{lem_error_recursion}}\label{proof_lem_error_recursion}
\subsection{Proof of Jacobian properties}
We first prove that $\mathbf{F}_{\mathcal{A}}(\mathbf{v})$ satisfies the following
\begin{align}\label{lipschitz_F}
	 \left\|\mathbf{F}_{\mathcal{A}}(\mathbf{v})-\mathbf{F}_{\mathcal{A}}(\mathbf{v}')\right\|_F^2
   &=\mathrm{Tr}\left[(\mathbf{v}-\mathbf{v}')(\mathbf{v}-\mathbf{v}')^T\mathbf{M}\right]\\
   &= (\mathbf{v}-\mathbf{v}')^T\mathbf{M}(\mathbf{v}-\mathbf{v}'),
\end{align}
where $\mathbf{M}$ is a constant matrix defined in \eqref{M} once the SCADA measurement placements are fixed.
From the expression of the Jacobian in the Appendix of the manuscript, we have
\begin{align}
	\mathbf{F}_{\mathcal{A}}(\mathbf{v})-\mathbf{F}_{\mathcal{A}}(\mathbf{v}')
	=
	\begin{bmatrix}
		\mathbf{0}_{2N\times 2N}\\
		\mathbf{0}_{4L\times 2N}\\
		 \left[\mathbf{R}_{\mathcal{I}}^{-\frac{1}{2}} \mathbf{J}_\mathcal{I}\otimes(\mathbf{v}-\mathbf{v}')^T\right]\mathbf{H}_{\mathcal{I}}\\
		 \left[\mathbf{R}_{\mathcal{F}}^{-\frac{1}{2}} \mathbf{J}_\mathcal{F}\otimes(\mathbf{v}-\mathbf{v}')^T\right]\mathbf{H}_{\mathcal{F}}
	\end{bmatrix}.
\end{align}
Using the $F$-norm definition $\left\|\cdot\right\|_F^2=\mathrm{Tr}\left[(\cdot)^T(\cdot)\right]$ together with the property of Kronecker products, we have
\begin{align*}
    &\left\|\mathbf{F}_{\mathcal{A}}(\mathbf{v})-\mathbf{F}_{\mathcal{A}}(\mathbf{v}')\right\|_F^2\\
    &= \mathrm{Tr}\left[\mathbf{H}_{\mathcal{I}}^T\left(\mathbf{J}_\mathcal{I}\mathbf{R}_{\mathcal{I}}^{-1}\mathbf{J}_\mathcal{I}^T\otimes(\mathbf{v}-\mathbf{v}')(\mathbf{v}-\mathbf{v}')^T\right)\mathbf{H}_{\mathcal{I}}\right]\\
    &~~~+\mathrm{Tr}\left[\mathbf{H}_{\mathcal{F}}^T\left(\mathbf{J}_\mathcal{F}\mathbf{R}_{\mathcal{F}}^{-1}\mathbf{J}_\mathcal{F}^T\otimes(\mathbf{v}-\mathbf{v}')(\mathbf{v}-\mathbf{v}')^T\right)\mathbf{H}_{\mathcal{F}}\right].
\end{align*}
Using the property of trace operators, the sub-matrices of $\mathbf{H}_{\mathcal{I}}$ and $\mathbf{H}_{\mathcal{F}}$, we can express the above norm by expanding the Kronecker product $\otimes$ and re-arrange the summation as \eqref{full_exp}, which leads to the result in \eqref{lipschitz_F}
with the matrix $\mathbf{M}$
\begin{align}\label{M}
	\mathbf{M}
	=& \sum_{n=1}^N \frac{\mathcal{I}_n}{\sigma_{\mathcal{I}}^2}\left(\mathbf{N}_{P,n}+\mathbf{N}_{P,n}^T\right)^T\left(\mathbf{N}_{P,n}+\mathbf{N}_{P,n}^T\right)\\
	&+ \sum_{n=1}^N \frac{\mathcal{I}_n}{\sigma_{\mathcal{I}}^2}\left(\mathbf{N}_{Q,n}+\mathbf{N}_{Q,n}^T\right)^T\left(\mathbf{N}_{Q,n}+\mathbf{N}_{Q,n}^T\right) \nonumber \\
	&+ \sum_{n,l} \frac{\mathcal{F}_{nm}}{\sigma_{\mathcal{F}}^2}\left(\mathbf{E}_{P,nm}+\mathbf{E}_{P,nm}^T\right)^T\left(\mathbf{E}_{P,nm}+\mathbf{E}_{P,nm}^T\right)\nonumber\\
 	&+\sum_{n,l} \frac{\mathcal{F}_{nm}}{\sigma_{\mathcal{F}}^2}\left(\mathbf{E}_{Q,nm}+\mathbf{E}_{Q,nm}^T\right)^T\left(\mathbf{E}_{Q,nm}+\mathbf{E}_{Q,nm}^T\right)\nonumber.
\end{align}	

\subsection{Proof of Error Recursion}
Using this result we are now ready to prove Lemma 1.
With the results we proved in \cite[Lem. 2]{li2012convergence}, we have the iterative error expressed as
\begin{align}\label{orig.recursion}
	\mathbf{v}^{k+1}-\mathbf{v}_{\textrm{\tiny est}} 
       &= \mathbf{F}_{\mathcal{A}}^\dagger(\mathbf{v}^k)\left[\mathbf{F}_{\mathcal{A}}(\mathbf{v}^k)\left(\mathbf{v}^k-\mathbf{v}_{\textrm{\tiny est}}\right) +\mathbf{f}_{\mathcal{A}}(\mathbf{v}_{\textrm{\tiny est}}) -\mathbf{f}_{\mathcal{A}}(\mathbf{v}^k)\right]\nonumber\\
	&~~~
    +\left[\mathbf{F}_{\mathcal{A}}^\dagger(\mathbf{v}^k)-\mathbf{F}_{\mathcal{A}}^\dagger(\mathbf{v}_{\textrm{\tiny est}})\right]
      \Big[\mathbf{z}_{\mathcal{A}}-\mathbf{f}_{\mathcal{A}}(\mathbf{v}_{\textrm{\tiny est}})\Big],
\end{align}
whose norm can be bounded as
\begin{align}\label{orig.recursion.bounded}
	&\left\|\mathbf{v}^{k+1}-\mathbf{v}_{\textrm{\tiny est}}\right\| \\ 
       &\leq \left\|\mathbf{F}_{\mathcal{A}}^\dagger(\mathbf{v}^k)\right\|\left\|\left[\mathbf{F}_{\mathcal{A}}(\mathbf{v}^k)\left(\mathbf{v}^k-\mathbf{v}_{\textrm{\tiny est}}\right) +\mathbf{f}_{\mathcal{A}}(\mathbf{v}_{\textrm{\tiny est}}) -\mathbf{f}_{\mathcal{A}}(\mathbf{v}^k)\right]\right\|\nonumber\\
	&~~~
    +\left\|\left[\mathbf{F}_{\mathcal{A}}^\dagger(\mathbf{v}^k)-\mathbf{F}_{\mathcal{A}}^\dagger(\mathbf{v}_{\textrm{\tiny est}})\right]
      \Big[\mathbf{z}_{\mathcal{A}}-\mathbf{f}_{\mathcal{A}}(\mathbf{v}_{\textrm{\tiny est}})\Big]\right\|,
\end{align}

First of all, by the definition of matrix norm, we have
\begin{align}
	\left\|\mathbf{F}_{\mathcal{A}}^\dagger(\mathbf{v})\right\|^2 = \left\|\mathbf{F}_{\mathcal{A}}^\dagger(\mathbf{v})\left(\mathbf{F}_{\mathcal{A}}^\dagger(\mathbf{v}))\right)^T\right\|=\left\|\left(\mathbf{F}_{\mathcal{A}}^T(\mathbf{v})\mathbf{F}_{\mathcal{A}}(\mathbf{v})\right)^{-1}\right\|.
\end{align}	
Then evidently, the norm of a matrix inverse corresponds to the reciprocal of the eigenvalue of that matrix with the smallest magnitude. By the definition
\begin{align}\label{beta}
	\beta(\bdsb{\mathcal{V}}) &=  \underset{\mathbf{v}\in\mathbb{V}}{\inf}~\lambda_{\min}
	\left[\mathbf{F}_{\mathcal{A}}^T(\mathbf{v})\mathbf{F}_{\mathcal{A}}(\mathbf{v})\right],
\end{align}
this quantity is bounded as $\beta(\bdsb{\mathcal{V}})$, and therefore
\begin{align}\label{pseudo_inverse_bound}
	\left\|\mathbf{F}_{\mathcal{A}}^\dagger(\mathbf{v})\right\|\leq 1/\sqrt{\beta(\bdsb{\mathcal{V}})}.
\end{align}

\subsubsection{\bf The First Term}
The first term can be written with the mean-value theorem as
\begin{align}\label{lipshitz_step}
    &\mathbf{F}_{\mathcal{A}}(\mathbf{v}^k)\left(\mathbf{v}^k-\mathbf{v}_{\textrm{\tiny est}}\right) +\mathbf{f}_{\mathcal{A}}(\mathbf{v}_{\textrm{\tiny est}}) -\mathbf{f}_{\mathcal{A}}(\mathbf{v}^k)\\
    &=\int_{0}^{1} \left[\mathbf{F}_{\mathcal{A}}(\mathbf{v}^k)-\mathbf{F}_{\mathcal{A}}\left(\mathbf{v}_{\textrm{\tiny est}}+t(\mathbf{v}^k-\mathbf{v}_{\textrm{\tiny est}})\right)\right]\left(\mathbf{v}^k-\mathbf{v}_{\textrm{\tiny est}}\right)\mathrm{d}t\nonumber\\
    &=\frac{1}{2}\left[\mathbf{F}_{\mathcal{A}}(\mathbf{v}^k)-\mathbf{F}_{\mathcal{A}}(\mathbf{v}_{\textrm{\tiny est}})\right]\left(\mathbf{v}^k-\mathbf{v}_{\textrm{\tiny est}}\right),
\end{align}
where the second equality comes from the linearity of the Jacobian in \eqref{Jacobian} in terms of the argument. Then the first term in \eqref{orig.recursion} can be bounded as
\begin{align*}
	& \left\|\mathbf{F}_{\mathcal{A}}(\mathbf{v}^k)\left(\mathbf{v}^k-\mathbf{v}_{\textrm{\tiny est}}\right) +\mathbf{f}_{\mathcal{A}}(\mathbf{v}_{\textrm{\tiny est}}) -\mathbf{f}_{\mathcal{A}}(\mathbf{v}^k)\right\|\\
	&\leq \frac{1}{2}\left\|\mathbf{F}_{\mathcal{A}}(\mathbf{v}^k)-\mathbf{F}_{\mathcal{A}}(\mathbf{v}_{\textrm{\tiny est}})\right\|\left\|\mathbf{v}^k-\mathbf{v}_{\textrm{\tiny est}}\right\|.
\end{align*}

\subsubsection{\bf The Second Term} From \cite[Lem. 1]{salzo2011convergence}, the second term in \eqref{orig.recursion} is bounded as
\begin{align*}
	&\left\|\left[\mathbf{F}_{\mathcal{A}}^\dagger(\mathbf{v}^k)-\mathbf{F}_{\mathcal{A}}^\dagger(\mathbf{v}_{\textrm{\tiny est}})\right]\left[\mathbf{z}_{\mathcal{A}}-\mathbf{f}_{\mathcal{A}}(\mathbf{v}_{\textrm{\tiny est}})\right]\right\|\\
	&\leq \sqrt{2}\left\|\mathbf{F}_{\mathcal{A}}^\dagger(\mathbf{v}^k)\right\|\left\|\mathbf{F}_{\mathcal{A}}^\dagger(\mathbf{v}_{\textrm{\tiny est}})\right\|\left\|\mathbf{F}_{\mathcal{A}}(\mathbf{v}^k)-\mathbf{F}_{\mathcal{A}}(\mathbf{v}_{\textrm{\tiny est}})\right\|\left\|\mathbf{z}_{\mathcal{A}}-\mathbf{f}_{\mathcal{A}}(\mathbf{v}_{\textrm{\tiny est}})\right\|.
\end{align*}
Using \eqref{pseudo_inverse_bound}, we have
\begin{align*}
	&\left\|\left[\mathbf{F}_{\mathcal{A}}^\dagger(\mathbf{v}^k)-\mathbf{F}_{\mathcal{A}}^\dagger(\mathbf{v}_{\textrm{\tiny est}})\right]\left[\mathbf{z}_{\mathcal{A}}-\mathbf{f}_{\mathcal{A}}(\mathbf{v}_{\textrm{\tiny est}})\right]\right\|\\
	&\leq \frac{\sqrt{2}}{\beta(\mathcal{V})}
	\left\|\mathbf{F}_{\mathcal{A}}(\mathbf{v}^k)-\mathbf{F}_{\mathcal{A}}(\mathbf{v}_{\textrm{\tiny est}})\right\|\left\|\mathbf{z}_{\mathcal{A}}-\mathbf{f}_{\mathcal{A}}(\mathbf{v}_{\textrm{\tiny est}})\right\|.
\end{align*}

Thus finally, the original recursion bounded in \eqref{orig.recursion.bounded} can be simplified as
\begin{align}\label{new.recursion}
	\left\|\mathbf{v}^{k+1}-\mathbf{v}_{\textrm{\tiny est}}\right\|
   	&\leq  \frac{\left\|\mathbf{v}^k-\mathbf{v}_{\textrm{\tiny est}}\right\|}{2\sqrt{\beta(\bdsb{\mathcal{V}})}}\left\|\mathbf{F}_{\mathcal{A}}(\mathbf{v})-\mathbf{F}_{\mathcal{A}}(\mathbf{v}')\right\|\\
	&~~+ \frac{\sqrt{2}\|\mathbf{z}_{\mathcal{A}}-\mathbf{f}_{\mathcal{A}}(\mathbf{v}_{\textrm{\tiny est}})\|}{\beta(\bdsb{\mathcal{V}})}\left\|\mathbf{F}_{\mathcal{A}}(\mathbf{v})-\mathbf{F}_{\mathcal{A}}(\mathbf{v}')\right\|.\nonumber
\end{align}

Using the norm inequality $\|\cdot\|\leq\|\cdot\|_F$ and the result we previously proved in \eqref{lipschitz_F}, we have
\begin{align}
	\left\|\mathbf{F}_{\mathcal{A}}(\mathbf{v}^k)-\mathbf{F}_{\mathcal{A}}(\mathbf{v}_{\textrm{\tiny est}})\right\|\leq
	\sqrt{(\mathbf{v}^k-\mathbf{v}_{\textrm{\tiny est}})^T\mathbf{M}(\mathbf{v}^k-\mathbf{v}_{\textrm{\tiny est}})}.
\end{align}

By denoting $\epsilon=\|\mathbf{z}_{\mathcal{A}}-\mathbf{f}_{\mathcal{A}}(\mathbf{v}_{\textrm{\tiny est}})\|$ as the noise level and $$\phi_k=  \frac{\left(\mathbf{v}^k-\mathbf{v}_{\textrm{\tiny est}}\right)^T\mathbf{M}\left(\mathbf{v}^k-\mathbf{v}_{\textrm{\tiny est}}\right)}{\left\|\mathbf{v}^k-\mathbf{v}_{\textrm{\tiny est}}\right\|^2}$$ as the Rayleigh quotient of $\mathbf{M}$ in the $k$-th iteration, we have

\begin{align}\label{new.recursion}
	\left\|\mathbf{v}^{k+1}-\mathbf{v}_{\textrm{\tiny est}}\right\|
   	&\leq \frac{1}{2}\sqrt{ \frac{\phi_k}{\beta(\bdsb{\mathcal{V}})}}\left\|\mathbf{v}^k-\mathbf{v}_{\textrm{\tiny est}}\right\|^2
	+ \frac{\epsilon\sqrt{2 \phi_k}}{\beta(\bdsb{\mathcal{V}})}\left\|\mathbf{v}^k-\mathbf{v}_{\textrm{\tiny est}}\right\|.\nonumber
\end{align}
Then, the result in Lemma 1 is readily obtained by setting $\rho_k=\left\|\mathbf{v}^k-\mathbf{v}_{\textrm{\tiny est}}\right\|$.

\section{Proof of Proposition \ref{prop_phi}}\label{proof_prop_phi}
To maintain tractability, the $\phi(\bdsb{\mathcal{V}})$ in the COP metric is approximated by the first Rayleigh quotient $\phi_0$ resulting from initialization. Therefore, we upper bound $\phi_0$ assuming that the noise in the PMU is negligible\footnote{If the noise is not small enough to be neglected, the metric becomes random and needs to be optimized on an average whose expression cannot be obtained in close-form. Thus we consider the small noise case and neglect it.}, then $\mathbf{z}_{\mathcal{V}}= \mathbf{v}_{\textrm{\tiny true}}\approx\mathbf{v}_{\textrm{\tiny est}}$ and therefore $\mathbf{v}^0-\mathbf{v}_{\textrm{\tiny est}} \approx\left(\mathbf{I}_{2N}-\mathbf{J}_{\mathcal{V}}\right)\left(\mathbf{v}_{\textrm{\tiny prior}}-\mathbf{v}_{\textrm{\tiny est}}\right)$, implying that
\begin{align*}
	\phi_0
	\approx
	 \frac{\left(\mathbf{v}_{\textrm{\tiny prior}}-\mathbf{v}_{\textrm{\tiny est}}\right)^T\left(\mathbf{I}_{2N}-\mathbf{J}_{\mathcal{V}}\right)^T\mathbf{M}\left(\mathbf{I}_{2N}-\mathbf{J}_{\mathcal{V}}\right)\left(\mathbf{v}_{\textrm{\tiny prior}}-\mathbf{v}_{\textrm{\tiny est}}\right)}{\left(\mathbf{v}_{\textrm{\tiny prior}}-\mathbf{v}_{\textrm{\tiny est}}\right)^T\left(\mathbf{I}_{2N}-\mathbf{J}_{\mathcal{V}}\right)^T\left(\mathbf{I}_{2N}-\mathbf{J}_{\mathcal{V}}\right)\left(\mathbf{v}_{\textrm{\tiny prior}}-\mathbf{v}_{\textrm{\tiny est}}\right)}.
\end{align*}
Considering the idempotence of $\left(\mathbf{I}_{2N}-\mathbf{J}_{\mathcal{V}}\right)=\left(\mathbf{I}_{2N}-\mathbf{J}_{\mathcal{V}}\right)^2$ in the numerator, the approximate bound is obtained as \eqref{omega_OPT}.

\section{Proof of Remark \ref{performance}}\label{proof_cond_perf}
Denoting $\lambda_n[\cdot]$ as the $n$-th eigenvalue of a matrix, the lower bound of the MSE is given by the trace of the FIM
\begin{align*}
	\mathbb{E}\left[\left\|\mathbf{v}_{\textrm{\tiny est}}-\mathbf{v}_{\textrm{\tiny true}}\right\|^2\right]
	&= \mathrm{Tr}\left[\mathbf{F}_{\mathcal{A}}^T(\mathbf{v}_{\textrm{\tiny true}})\mathbf{F}_{\mathcal{A}}(\mathbf{v}_{\textrm{\tiny true}})\right]^{-1}\\
	&= \sum_{n=1}^{2N}\lambda_n^{-1}\left[\mathbf{F}_{\mathcal{A}}^T(\mathbf{v}_{\textrm{\tiny true}})\mathbf{F}_{\mathcal{A}}(\mathbf{v}_{\textrm{\tiny true}})\right]\\
	&\leq 2N\lambda_{2N}^{-1}\left[\mathbf{F}_{\mathcal{A}}^T(\mathbf{v}_{\textrm{\tiny true}})\mathbf{F}_{\mathcal{A}}(\mathbf{v}_{\textrm{\tiny true}})\right],
\end{align*}
where the last inequality is obtained by bounding the inverse of each eigenvalue by the inverse of the minimum eigenvalue. From \eqref{beta}, $\beta(\bdsb{\mathcal{V}})$ equivalently serves as a lower bound of the minimum eigenvalue of the FIM
\begin{align}
	\beta(\bdsb{\mathcal{V}})
	\leq \lambda_{\min}\left[\mathbf{F}_{\mathcal{A}}^T(\mathbf{v}_{\textrm{\tiny true}})\mathbf{F}_{\mathcal{A}}(\mathbf{v}_{\textrm{\tiny true}})\right]
\end{align}
and therefore $\lambda_{2N}^{-1}\left[\mathbf{F}_{\mathcal{A}}^T(\mathbf{v}_{\textrm{\tiny true}})\mathbf{F}_{\mathcal{A}}(\mathbf{v}_{\textrm{\tiny true}})\right]\leq 1/\beta(\bdsb{\mathcal{V}})$. Thus, the result in the condition follows.

\bibliographystyle{IEEEtran}
\bibliography{../ref_general,../ref_opt_PMU_placement,../ref_power_system_SE,../ref_dist_opt}


\end{document}